\begin{document}

\newtheorem{theorem}{Theorem}
\newtheorem{mainResult}[theorem]{Main result}
\newtheorem{proposition}[theorem]{Proposition}
\newtheorem{lemma}[theorem]{Lemma}
\newtheorem{corollary}[theorem]{Corollary}

\theoremstyle{definition}
\newtheorem{definition}[theorem]{Definition}
\newtheorem{notation}[theorem]{Notation}
\newtheorem{question}[theorem]{Question}
\newtheorem{remark}[theorem]{Remark}
\newtheorem{exempl}[theorem]{Exempl}
\providecommand{\keywords}[1]{\textbf{\textit{Keywords:}} #1}
\def\ONE{{\mathbb{1}}}%\mathds{1} with package dsfont

\title{Synchronization hypothesis in the Winfree model}

\author{W. Oukil$^{1}$, A.  Kessi$^{1}$ and Ph. Thieullen$^{2}$, \\ \quad \\
\small\text{1. Laboratory of Dynamical Systems (LSD), Faculty of Mathematics,}\\
\small\text{ University of Sciences and Technology Houari Boumediene,}\\
\small\text{ BP 32 El Alia 16111, Bab Ezzouar, Algiers, Algeria.}\\
\small\text{2. Institute of Mathematics of Bordeaux (IMB), University of Bordeaux,}\\
\small{351, cours de la Liberation - F 33 405 Talence, France.}}

\date{\today}

\maketitle
\begin{abstract}
We consider $N$ oscillators coupled by a mean field as in the Winfree model. The model is governed by two parameters: the coupling strength $\kappa$ and the spectrum width $\gamma$ of the frequencies of each oscillator centered at 1.  In the uncoupled regime, $\kappa=0$,  each oscillator possesses its own natural frequency, and the difference between the phases of any two oscillators grows linearly in time. In the zero-width regime for the spectrum, the oscillators are simultaneously in the death state if and only if $\kappa$ is above some positive value $\kappa_*$.  We say that $N$ oscillators are synchronized if the difference between  any two phases is uniformly bounded in time. We identify a new hypothesis for the existence of synchronization. The domain in $(\gamma,\kappa)$ of synchronization contains $\{0\} \times [0,\kappa_*]$ in its closure.  Moreover the domain is independent of the number of oscillators and the distribution of the frequencies. We show numerically  that the above hypothesis is necessary for the existence of synchronization.  
\end{abstract}
\begin{keywords}Coupled oscillators, Winfree Model, synchronization, desynchronization, periodic orbit
\end{keywords}

%%%%%%%%%%%%%%%%%%%%%%%%%%%%%%%%%%%%%%%%%%%%%%%%%%%%%%%%%%

%%%%%%%%%%%%%%%%%%%%%%%%%%%%%%%%%%%%%%%%%%%%%%%%%%

\section{Introduction}

In 1967 Winfree \cite{WinfreeModel}   proposed a model  describing the synchronization of a population of organisms or {\it oscillators} that interact simultaneously. We assume that the state of each oscillator is described by a real number, the phase, that corresponds to a  point on the circle. We call natural frequency, the frequency of each oscillator, as  if it were isolated from the others. The natural frequencies are  localized inside an interval $[1-\gamma,1+\gamma]$ for some constant $\gamma$ that we call {\it spectrum width}. The interaction of the surrounding oscillators on a particular oscillator has a shape independent of this oscillator. The term {\it mean field} is used for this kind of interaction or coupling. We denote by $N$ the number of oscillators and by $\kappa$ a parameter called {\it coupling strength} that measures the coupling between these oscillators.

A numerical investigation and a mathematical analysis of the  transitions between the phases  show a diagram of 4 major states (see  Ariaratnam and Strogatz \cite{AriaratnamStrogatz}).  A phase called {\it death state} where each oscillator is at rest, a phase called {\it locking state} where all the oscillators possess the same non-zero frequency, a phase called {\it  incoherence state} where  any two oscillators have different frequencies, and a phase called {\it  partial death state} which is a mixed state where a  part of the oscillators are in death state and a  part is in incoherence state.  For small values  of $\kappa$, the Winfree model is reduced to a different model, the Kuramoto model that will not be studied in this article. For large values of $\kappa$, the oscillators are in the death state. We denote by $\kappa_*$ the bifurcation parameter for the death state in the case $\gamma=0$.

We say that $N$ oscillators are synchronized if $\max_{1\leq i, j\leq N} |x_i(t) - x_j(t)|$ is bounded from above uniformly in time. Notice that we do not consider the phase of the oscillators but rather their lifts. For example, in the case $\gamma=0$, our definition implies that $N$ oscillators are always synchronized, independently of their initial conditions,  thanks to the uniqueness property of periodic solutions of an ODE. The existence of synchronization has been studied in \cite{Kuramoto,Omelchenko,Panaggio} for $\kappa$ close to 0. We identify a new hypothesis of synchronization that produces  a domain of parameters  $(\gamma,\kappa)$  containing $\{0\} \times [0,\kappa_*]$ in its closure.  We show in addition in a numerical example that the existence of synchronization  disappears precisely when the hypothesis is not anymore realized.

The Winfree model is given by the following differential equation
\begin{equation}\label{equation:WinfreeModel}
\dot{x}_{i}=\omega_i-\kappa  \frac{1}{N}\sum_{j=1}^{N}P(x_j)R(x_{i})
 \end{equation}
where $P$ and $R$ are $C^2$ $2\pi$-periodic functions, $x_i(t)$ is the phase of the i-th oscillator, and  $X(t)=(x_1(t), \ldots,x_N(t))$ is the global state of the system. Although $x_i(t)$ represents a scalar in the torus $\mathbb{R}/2\pi \mathbb{Z}$, we actually consider its unique continuous lift in $\mathbb{R}$, that we continue to call it  $x_i(t)$. When $N=1$ and $\omega_1=1$, the Winfree model reduces to the equation
\begin{equation} \label{equation:SimpleWinfreeModel}
\dot x = 1 -\kappa P(x)R(x).
\end{equation}
We call {\it locking bifurcation critical parameter}, the coupling critical parameter $\kappa_*$ which separates the death state and the locking state in the reduced Winfree model:
\begin{equation}
\label{equation:DeafBifurcationParameter}
\kappa_* := \max\{ \kappa>0 \,:\, 1-\kappa P(x) R(x) >0, \ \forall x\in\mathbb{R} \}.
\end{equation}
(Notice that $\kappa_*=+\infty$ if $\max_x P(x)R(x)\leq 0$.) We assume through out this work the following hypotheses:
\begin{itemize}
\item[H1] the coupling strength $\kappa$ is taken in the interval $ (0,\kappa_*)$,
\item[H2] the natural frequencies  $\omega_i$ are chosen in $(1-\gamma,1+\gamma)$ with $\gamma \in (0,1)$,
\item[H3] synchronization hypothesis: \ $\displaystyle{\int_0^{2\pi}\! \frac{P(s)R'(s)}{1-\kappa P(s)R(s)} \,ds} > 0$, $\forall \kappa \in(0,\kappa_*)$.
\end{itemize} 
For numerical results we use a simplified version of the Winfree model
\begin{equation}\label{equation:NumericalWinfreeModel}
\dot x_i = \omega_i - \frac{\kappa}{N} \sum_{j=1}^N P_\beta(x_j) \sin(x_i), \ \text{where} \ P_\beta(x) = 1+\cos(x+\beta),
\end{equation}
and $\beta \in [0,\pi]$. It is important to notice that we do not make any assumptions on the number $N$ of oscillators; in particular we do not assume that $N \to +\infty$. We do  not assume either that the natural frequencies are distributed according to  a particular law as it is done in \cite{AriaratnamStrogatz}. Our main objective is to exhibit a domain in the parameter space $(\gamma,\kappa)$ independent of $N$ and any choice of the natural frequencies satisfying H2 such that the oscillators stay synchronized for all time and  for any initial conditions sufficiently packed.

Synchronization and locking may have several meanings or definitions depending on the authors. We choose the following definitions.
\begin{definition}[Synchronization]
We say that the oscillators $x_i(t)$ are synchronized  if $\sup_{1 \leq i,j \leq N}  |x_i(t)-x_j(t)| $ is bounded from above uniformly in time $t\ge0$.
\end{definition}

\begin{definition}[Periodic locking]\label{definition:periodiclocking}
We say that the  oscillators $x_i(t)$ are periodically locked to the frequency $\Omega$ if they are synchronized and if there exist $2\pi/\Omega$-periodic functions $\Psi_i(t)$  such that
\[
x_i(t) = \Omega t + \Psi_i(t), \quad \forall i= 1\ldots N,  \ \forall t\geq0.
\]
\end{definition}
\noindent We notice that in the periodic locking case  the average frequency $\frac{x_i(t)}{t}$ admits the limit $\Omega$ as $t\to+\infty$ and that this limit is independent of the ith oscillator. The fact that the limit does exist has never been addressed mathematically (except of course in the death state). Our main result is a partial result in that direction in the locking case when $\kappa$ is any parameter in $(0,\kappa_*)$  and $\gamma  \in (0, \Gamma(\kappa))$ for some $\Gamma : [0,\kappa_*] \to \mathbb{R}^+$.

\begin{theorem}\label{mainresult}
We consider the Winfree model given by \eqref{equation:WinfreeModel} and satisfying the hypotheses H1--H3. Then, there exists an open set $U$ in the space of parameters $(\gamma,\kappa) \in (0,1) \times (0,\kappa_*)$, independent of $N$, containing in its closure $\{0\}\times [0,\kappa_*]$,  such that for every parameter $(\gamma,\kappa) \in U$, for every $N\geq1$ and  every choice of natural frequencies $(\omega_i)_{i=1}^N$,
\begin{enumerate}
\item There exists an open set $C_{\gamma,\kappa}^N$ invariant by the flow, and of the form, 
\[
C_{\gamma,\kappa}^N := \Big\{ X=(x_i)_{i=1}^N \in \mathbb{R}^N \,:\, \max_{i,j}|x_j-x_i| < \Delta_{\gamma,\kappa}\Big( \frac{1}{N}\sum_{i=1}^Nx_i \Big) \Big\}
\]
where $\Delta_{\gamma,\kappa} : \mathbb{R} \to (0,1)$ is a $2\pi$-periodic $C^2$ function independent of $N$. In other words, for every parameters  $(\gamma,\kappa) \in U$,  the oscillators are synchronized for any initial conditions $x_i(0)$ with sufficiently small dispersion.
\item  There exists a particular  initial condition $(x_i^*(0))_{i=1}^N\in C_{\gamma,\kappa}^N$, and a common frequency  $\Omega_{\gamma,\kappa}>0$ such that
\[
x_i^*(t) = \Omega_{\gamma,\kappa} t + \Psi_{i,\gamma,\kappa}^N(t), \quad \forall i=1,\ldots,N, \ \forall  t\geq 0,
\]
where $\Psi_{i, \gamma,\kappa}^N : \mathbb{R}^+ \to \mathbb{R}$ are $C^2$,  ${2\pi}/{\Omega_{\gamma,\kappa}}$-periodic functions, uniformly bounded with respect to $N$. In other words, the oscillators are periodically  locked for a particular initial condition.
\end{enumerate}
\end{theorem}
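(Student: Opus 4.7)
The plan is to view the system as a perturbation of the $\gamma=0$ synchronized flow. When $\gamma=0$ and $x_1(0)=\cdots=x_N(0)$, all oscillators coincide with the unique solution $x^0(t)$ of \eqref{equation:SimpleWinfreeModel}, which by H1 is strictly increasing with rotation number $\Omega_0 = 2\pi/T_0 > 0$, where $T_0 = \int_0^{2\pi} ds/(1-\kappa P(s)R(s))$. The linearization along this solution against a small perturbation of the phase has monodromy $\exp\bigl(-\kappa\int_0^{T_0} P(x^0(t))R'(x^0(t))\,dt\bigr)$; after the change of variable $s=x^0(t)$, $ds = (1-\kappa P R)\,dt$, this becomes $\exp\bigl(-\kappa\int_0^{2\pi}\frac{P(s)R'(s)}{1-\kappa P(s)R(s)}\,ds\bigr)$, which is strictly less than $1$ by H3. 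This is the contraction I would exploit throughout.

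Next I would pass to coordinates $\bar x := N^{-1}\sum_i x_i$ and $y_i := x_i - \bar x$ (so $\sum_i y_i = 0$), and rewrite \eqref{equation:WinfreeModel} as a coupled system for $\bar x$ and $(y_i)_{i=1}^N$. Taylor-expanding the averages $\bar P(X) := N^{-1}\sum_j P(x_j)$ and $\bar R(X)$ at $\bar x$ shows that $\dot{\bar x}$ differs from $1-\kappa P(\bar x)R(\bar x)$ only by terms of order $\gamma$ and $\max_i y_i^2$; hence, on a region where both the spread and $\gamma$ are small, $\bar x$ performs a near-periodic monotone rotation. The exact deviation equation $\dot y_i = (\omega_i - \bar\omega) - \kappa\bar P(X)[R(x_i)-\bar R(X)]$ linearizes, along the mean orbit and up to an $O(\gamma)$ forcing, to a diagonal contraction whose integrated rate over one rotation of $\bar x$ is precisely the quantity appearing in H3.

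I would then construct $\Delta_{\gamma,\kappa}$ as the positive $2\pi$-periodic solution of a scalar linear inhomogeneous ODE in $\bar x$ of the form $\Delta'(\bar x)\bigl(1-\kappa P(\bar x)R(\bar x)\bigr) + \kappa P(\bar x)R'(\bar x)\,\Delta(\bar x) = c(\gamma)$, where $c(\gamma) = O(\gamma)$ absorbs the driving by the heterogeneity; existence, positivity and periodicity of $\Delta$ follow from H3 via the Fredholm alternative applied to the strict monodromy contraction computed above. To prove invariance of $C_{\gamma,\kappa}^N$, I would work with the Lipschitz function $V(X) := \max_{i,j}(x_j-x_i) - \Delta_{\gamma,\kappa}(\bar x)$, use its Dini derivative, and check that at any pair $(i,j)$ achieving the maximum, the coupling contribution $-\kappa\bar P(X)[R(x_j)-R(x_i)]$ dominates the $O(\gamma)$ drift and the quadratic corrections. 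A direct estimate shows that this is achievable on an open set $U \subset (0,1)\times(0,\kappa_*)$ whose closure contains $\{0\}\times[0,\kappa_*]$, yielding the first assertion.

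For the locked periodic orbit I would introduce the Poincaré section $\Sigma := \{\bar x \equiv 0 \bmod 2\pi\} \cap C_{\gamma,\kappa}^N$, restricted to the hyperplane $\sum_i y_i = 0$; by invariance of $C_{\gamma,\kappa}^N$ and strict positivity of $\dot{\bar x}$, the first-return map $\Pi: \Sigma \to \Sigma$ is well-defined and smooth. Its derivative, via the same H3 change of variable, reduces to a strict contraction, so $\Pi$ is a uniform contraction on a shrunk compact convex subset of $\Sigma$; Banach's fixed-point theorem yields a unique fixed point $Y^*$, which lifts to $x_i^*(t) = \Omega_{\gamma,\kappa} t + \Psi_{i,\gamma,\kappa}^N(t)$ with $\Omega_{\gamma,\kappa} = 2\pi/T^*$ and $2\pi/\Omega_{\gamma,\kappa}$-periodic $\Psi_i$ bounded (after re-centering) by $\|\Delta_{\gamma,\kappa}\|_\infty$, hence uniformly in $N$. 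The main obstacle is the simultaneous construction of $\Delta_{\gamma,\kappa}$ and the verification of inward-pointing flow at $V=0$: non-smoothness of $\max_{i,j}$ forces a Dini-derivative argument, and H3 must be used quantitatively, not merely qualitatively, in order to balance the $O(\gamma)$ heterogeneity against the coupling contraction that delimits $U$.
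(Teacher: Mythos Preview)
Your proposal is correct and, for Item~1, follows essentially the same route as the paper: mean/deviation coordinates, the change of time variable $s=\bar x$, a scalar affine $2\pi$-periodic ODE for the dispersion bound $\Delta$, and an inward-pointing argument at the boundary of $C^N_{\gamma,\kappa}$. The paper's contradiction at the first exit time is equivalent to your Dini-derivative computation. One point you underplay is the self-consistency step: the forcing on the right-hand side of the $\Delta$-equation is not purely $O(\gamma)$ but contains a term $O(\kappa D^2)$ coming from the second-order Taylor remainder, so one must first posit an a priori bound $\delta<D$, solve for $\Delta$ with that $D$ in the forcing, and then choose $D=D(\kappa)$ so that $\max_s\Delta(s)<D$. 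The paper carries this bootstrap out explicitly; your ``direct estimate'' sentence is where it would go.

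The genuine divergence is in Item~2. The paper does not try to show that the Poincar\'e map is a contraction; it simply observes that the strict sub-solution inequality forces $\Pi(\bar\Sigma_{\gamma,\kappa})\subset\Sigma_{\gamma,\kappa}$ and applies Brouwer on the compact convex section. Your Banach route is viable but requires controlling the full variational equation along every trajectory in the tube: the off-diagonal entries coming from $\partial_{x_j}\bar P(X)\cdot R(x_i)$ are $O(\delta)$, hence dominated by the diagonal contraction once $D$ is small enough, so the claim holds, but this is additional work the paper sidesteps. In exchange, your argument yields uniqueness and exponential attractivity of the locked orbit inside $C^N_{\gamma,\kappa}$, neither of which the paper obtains from Brouwer alone.
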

Notice that the simplified Winfree model \eqref{equation:NumericalWinfreeModel} satisfies the hypothesis H3 when $\beta =0$ as it is proved in section \ref{section:NumericalResults}.

For small values of $\kappa <<1$, Quinn and Strogatz \cite{Quinn} use the Lindstedt's Method and find an approximation value of the tangent bifurcation curve of the synchronization sate. We mention relative results about the Winfree model in \cite{Popovych, Giannuzzi,Basnarkov,Louca,PazoMontbrio}. Recently Ha, Park and Ryoo studied  in \cite{HaParkRyoo} the stability of the stationary state for large coupling.

\section{Proof of theorem \ref{mainresult}}\label{Reduction}

Let $X=(x_1,\ldots,x_N)$ be a solution of \eqref{equation:WinfreeModel}. We call mean $\mu(t)$ and dispersion $\delta(t)$ of $X$ the quantities
\begin{equation} \label{equation:mudelta}
\mu(t) := \small\frac{1}{N}\small\sum_{j=1}^{N}x_j(t), \quad  \delta(t) := \max_{i,j}|\delta_{i,j}(t)|, \quad  \delta_{i,j}(t) := x_i(t)-x_j(t).
\end{equation}
We want to build a space of parameters $U$ of the form
\[
U=\{ (\gamma,\kappa) \in (0,1) \times (0,\kappa_*) \,:\, 0 < \gamma < \Gamma(\kappa) \},
\]
and a $2\pi$-periodic function $\Delta_{\gamma,\kappa}$, called {\it dispersion curve} such that, if $X(t)$ is a solution of the Winfree model \eqref{equation:WinfreeModel} satisfying H1 -- H3,
\[
\delta(0) < \Delta_{\gamma,\kappa}(\mu(0)) \quad\Longrightarrow \quad \delta(t) < \Delta_{\gamma,\kappa}(\mu(t)), \quad \forall t \geq 0.
\]

The dispersion curve is obtained by solving a non-autonomous affine differential equation with periodic coefficients. The following lemma is standard. 

\begin{lemma} \label{lemma:linearWinfree}
We consider the affine differential equation
\begin{equation} \label{equation:linear Winfree}
\frac{d}{ds} \Delta(s) = \alpha -\beta(s) \Delta(s),
\end{equation}
where $\alpha>0$ and $\beta$ is a $C^1$, $2\pi$-periodic function satisfying 
\[
\int_0^{2\pi}\! \beta(s) \, ds >0.
\]
Then there exists a unique $C^2$, positive  and $2\pi$-periodic function solution of \eqref{equation:linear Winfree} given by
\[
\Delta(s) := \alpha \frac{\int_s^{s+2\pi} \exp \big( \int_s^t \beta(u) \, du \big) dt}{\exp \big(\int_0^{2\pi}\! \beta(u) \,du \big)-1}.
\]
Moreover
\[
\max_{s\in\mathbb{R}} \Delta(s) \leq  \alpha 2\pi \frac{\exp \big( \int_0^{2\pi}\! \beta^-(u) \,du \big)}{1-\exp \big(-\int_0^{2\pi}\! \beta(u) \,du \big)}
\]
where $\beta^- := \max(0,-\beta)$, and $\beta^+ := \max(0,\beta)$.
\end{lemma}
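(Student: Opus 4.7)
The plan is to treat this as a textbook first order linear ODE. First I would solve \eqref{equation:linear Winfree} by the integrating factor $\exp\bigl(\int_0^s \beta(u)\,du\bigr)$, obtaining the general solution
\[
\Delta(s) = \exp\Bigl(-\int_0^s \beta(u)\,du\Bigr)\Bigl[\Delta(0)+\alpha \int_0^s \exp\Bigl(\int_0^t \beta(u)\,du\Bigr)\,dt\Bigr].
\]
To have a $2\pi$-periodic solution I impose $\Delta(2\pi)=\Delta(0)$. Writing $B=\int_0^{2\pi}\beta(u)\,du>0$, this condition becomes $\Delta(0)(e^B-1)=\alpha\int_0^{2\pi}\exp\bigl(\int_0^t\beta(u)\,du\bigr)\,dt$, which uniquely determines $\Delta(0)$ because $e^B\neq 1$. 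Substituting back and exploiting the $2\pi$-periodicity of $\beta$ to shift the starting point of integration from $0$ to $s$ yields the closed form stated in the lemma. Regularity ($C^2$) follows because $\beta\in C^1$ implies $\Delta\in C^1$ from the ODE and then $\Delta'=\alpha-\beta\Delta$ is itself $C^1$.

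Uniqueness is obtained by subtracting two periodic solutions: the difference solves the homogeneous equation $\Delta'=-\beta\Delta$, hence is a constant multiple of $\exp\bigl(-\int_0^s\beta(u)\,du\bigr)$, and the periodicity forces that constant to vanish since $e^{-B}\neq 1$. Positivity is immediate from $\alpha>0$, $B>0$, and the positivity of the integrand in the numerator.

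For the upper bound, the idea is to rewrite the formula so that the exponent in the numerator becomes nonpositive and can be controlled by $\beta^-$. Dividing numerator and denominator by $e^B$ gives
\[
\Delta(s)=\alpha\,\frac{\int_s^{s+2\pi}\exp\bigl(-\int_t^{s+2\pi}\beta(u)\,du\bigr)\,dt}{1-e^{-B}}.
\]
Since $-\beta \leq \beta^-$ and $\beta^-$ is $2\pi$-periodic and nonnegative, for any $t\in[s,s+2\pi]$ we have $-\int_t^{s+2\pi}\beta(u)\,du\leq\int_t^{s+2\pi}\beta^-(u)\,du\leq\int_0^{2\pi}\beta^-(u)\,du$. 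Pulling this uniform bound out of the integral produces the claimed inequality with the factor $2\pi$ coming from the length of the interval of integration.

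No step is really an obstacle here; the only point that requires a little care is the rewriting that converts $e^B-1$ into $1-e^{-B}$ so that the exponent in the integrand becomes nonpositive and a $\beta^-$-type estimate (rather than a cruder $\beta^+$ one) can be used. The hypothesis $\int_0^{2\pi}\beta>0$ is used exactly twice: once to ensure $e^{\pm B}\neq 1$ (existence and uniqueness) and once to guarantee positivity of the denominator in the maximum estimate.
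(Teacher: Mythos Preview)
Your proposal is correct and follows essentially the same route as the paper: solve the linear ODE via the integrating factor, impose periodicity to pin down the unique initial value, and then bound the exponent in the integral representation using $-\beta\le\beta^-$ over a subinterval of length at most $2\pi$. The only cosmetic difference is that the paper bounds $\int_s^t\beta$ directly by $\int_0^{2\pi}\beta+\int_0^{2\pi}\beta^-$ in the original formula, whereas you first divide through by $e^B$ to reach the equivalent form with denominator $1-e^{-B}$ before estimating; the two computations are interchangeable.
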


\begin{proof}

The differential equation \eqref{equation:linear Winfree}  is a first-order non-homogeneous  linear differential equation. The general form of solution $z(t)$ of \eqref{equation:linear Winfree} with initial condition $z(s_{0})\in \mathbb{R}$ at time $s_0\in \mathbb{R}$ is given by
\[
z(s)=\exp \big( \int_{s_{0}}^{s}\! -\beta(u) \,du \big)z(s_{0})+\alpha\int_{s_{0}}^{s} \exp \big( \int_t^s -\beta(u) \, du \big)dt.
\]
If $z(s)$ is $2\pi$-periodic, using the $2\pi$-periodicity of $\beta$, one obtains a unique $z(s_0)$ given by
\[
z(s_{0})=\alpha\frac{\int_{s_{0}}^{s_{0}+2\pi} \exp \big( \int_t^{s_{0}+2\pi} -\beta(u) \, du \big)dt}{1-\exp \big( \int_{s_{0}}^{s_{0}+2\pi}\! -\beta(u) \,du \big)}
\]
and a unique $2\pi$-periodic solution,  positive thanks to $\alpha>0$, given by
\[
z(s)=\alpha\frac{\int_{s}^{s+2\pi} \exp \big( \int_t^{s+2\pi} -\beta(u) \, du \big) dt}{1-\exp \big( \int_{0}^{2\pi}\! -\beta(u) \,du \big)}.
\]
The maximum of $\Delta$ is obtained using the following estimates
\begin{align*}
\int_s^t\! \beta(s) \,ds &= \int_s^t\! (\beta^+-\beta^-)(s) \,ds \leq \int_s^t\! \beta^+(s) \, ds \leq \int_0^{2\pi}\! \beta^+(s) \, ds, \\
&\leq \int_0^{2\pi}\! (\beta^+-\beta^-)(s) \,ds +  \int_0^{2\pi}\! \beta^-(s) \,ds, \\
&\leq  \int_0^{2\pi}\! \beta(s) \,ds +  \int_0^{2\pi}\! \beta^-(s) \,ds. \qedhere
\end{align*}
\end{proof}

The following lemma shows that, if  the dispersion  of a solution $X(t)$ of \eqref{equation:WinfreeModel} is a priori uniformly bounded, $\delta(t) < D, \ \forall t \in[0,t_*]$, then each $\delta_{i,j}(t)$ is a sub-solution strict of an affine differential equation as in lemma \ref{lemma:linearWinfree}.

\begin{lemma}\label{lemma:ODEdispersion}
Let us assume that for some $(\gamma,\kappa) \in (0,1) \times (0,\kappa_*)$, there exist $D>0$ and $t_*>0$ such that for every $t\in [0,t_*]$, the solution $X(t)$ of \eqref{equation:WinfreeModel} satisfies $\delta(t) < D$. Then for every $1 \leq i,j\leq N$, and $t\in[0,t_*]$
\begin{gather*}
\frac{d}{dt}\delta_{i,j} < (2\gamma+C\kappa D^2) -\kappa P(\mu)R'(\mu) \delta_{i,j}, \\
\big|\frac{d}{dt} \delta_{i,j}\big| \leq 2\gamma +C\kappa D^2 + \tilde C \kappa D,
\end{gather*}
where $\delta_{i,j}(t) := x_i(t)-x_j(t)$, $\mu(t) = \frac{1}{N}\sum_{k=1}^N x_k(t)$, 
\[
C :=\|P\|_\infty \|R''\|_\infty + \|P'\|_\infty \|R'\|_\infty, \  \text{and} \  \tilde C := \|P'\|_\infty \|R\|_\infty + \|P\|_\infty\|R'\|_\infty.
\]
\end{lemma}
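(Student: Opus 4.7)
Differentiating $\delta_{i,j}=x_i-x_j$ along the Winfree flow \eqref{equation:WinfreeModel} gives
\[
\dot\delta_{i,j} = (\omega_i-\omega_j) - \kappa\,\overline{P}(X)\bigl(R(x_i)-R(x_j)\bigr),\qquad \overline{P}(X):=\tfrac{1}{N}\sum_{k=1}^N P(x_k).
\]
By H2, $|\omega_i-\omega_j|<2\gamma$, so the first inequality reduces to controlling the nonlinear part, i.e.\ to showing
\[
\bigl|\overline{P}(X)\bigl(R(x_i)-R(x_j)\bigr) - P(\mu)R'(\mu)\,\delta_{i,j}\bigr|\le C D^2
\]
on $[0,t_*]$. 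The key ingredient is the a priori bound $|x_k-\mu|\le\delta<D$ for every $k$, which is what allows one to Taylor-expand around the mean $\mu$.

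Concretely, I would split the error into two natural pieces,
\[
\overline{P}(X)\bigl[R(x_i)-R(x_j)-R'(\mu)\delta_{i,j}\bigr]\;+\;\bigl[\overline{P}(X)-P(\mu)\bigr]R'(\mu)\delta_{i,j}.
\]
A second-order Taylor expansion of $R$ at $\mu$ (with Lagrange remainder on each of $x_i-\mu$ and $x_j-\mu$) gives $|R(x_i)-R(x_j)-R'(\mu)\delta_{i,j}|\le \|R''\|_\infty D^2$, so the first piece is bounded by $\|P\|_\infty\|R''\|_\infty D^2$. A first-order expansion of $P$ at $\mu$ applied termwise yields $|\overline{P}(X)-P(\mu)|\le\|P'\|_\infty D$; combining with $|R'(\mu)\delta_{i,j}|\le\|R'\|_\infty D$ bounds the second piece by $\|P'\|_\infty\|R'\|_\infty D^2$. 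Summing the two contributions produces exactly $CD^2$, and substituting back yields the first inequality with the strict sign inherited from H2. The second inequality follows from the first by the triangle inequality, since $|P(\mu)R'(\mu)|\le\|P\|_\infty\|R'\|_\infty\le\tilde C$ gives $|\dot\delta_{i,j}|\le 2\gamma+C\kappa D^2+\tilde C\kappa D$; one may also bound it directly from the defining formula for $\dot\delta_{i,j}$ without the intermediate splitting.

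The only real technical issue is the bookkeeping needed to match the constant $C=\|P\|_\infty\|R''\|_\infty+\|P'\|_\infty\|R'\|_\infty$ exactly. The quadratic gain on the $P$-factor comes for free from multiplying by $\delta_{i,j}$, which is itself $O(D)$; hence there is no need to exploit the mean-zero identity $\tfrac{1}{N}\sum(x_k-\mu)=0$ nor to invoke $\|P''\|_\infty$. All of this proceeds with $\mu(t)$ treated as a fixed (though time-dependent) parameter, which is why the final inequality is a clean affine ODE in $\delta_{i,j}$ with coefficient depending on $\mu(t)$; this is precisely the form needed to feed into Lemma \ref{lemma:linearWinfree} in the next step of the argument.
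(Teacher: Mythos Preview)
Your proof is correct and follows essentially the same route as the paper. The only cosmetic difference is the order of the two substitutions: the paper first replaces each $P(x_k)$ by $P(\mu)$ (picking up the $\|P'\|_\infty\|R'\|_\infty D^2$ term) and then replaces $R(x_i)-R(x_j)$ by $R'(\mu)\delta_{i,j}$ (picking up the $\|P\|_\infty\|R''\|_\infty D^2$ term), while you perform them in the opposite order and work with the averaged $\overline P(X)$ rather than termwise; the resulting bounds and constants are identical.
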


\begin{proof}
By substracting two equations \eqref{equation:WinfreeModel} we obtain
\begin{align*}
\frac{d \delta_{i,j}}{dt} &=\omega_i -\omega_j -\frac{\kappa}{N} \sum_{k=1}^N P(x_k) \big[ R(x_i)-R(x_j) \big] \\
&= \omega_i - \omega_j - \kappa P(\mu) R'(\mu) \delta_{i,j} + \frac{\kappa}{N} \sum_{k=1}^N  E_{i,j,k}
\end{align*}
where $E_{i,j,k} = P(\mu)R'(\mu)(x_i-x_j) - P(x_k) [ R(x_i) - R(x_j) ]$ that we bound from above using the following estimates
\begin{gather*}
P(x_k) [ R(x_i) - R(x_j) ] =  P(\mu) [ R(x_i) - R(x_j) ] + \tilde E_{i,j,k}, \\
|\tilde E_{i,j,k}| \leq \|P'\|_\infty \|R'\|_\infty |x_k-\mu||x_i-x_j| < \|P'\|_\infty \|R'\|_\infty D^2, \\
P(\mu)  [ R(x_i) -R(x_j) ] = P(\mu) R'(\mu) (x_i-x_j) + \hat E_{i,j,k}, \\
|\hat E_{i,j,k}| < \|P\|_\infty \|R''\|_\infty D^2,
\end{gather*}
and $|\omega_i - \omega_j| \leq 2\gamma$.
\end{proof}

We estimate in the following lemma the velocity of $\mu$. We will later  find conditions on $(\gamma,\kappa)$ so that $\frac{d\mu}{dt}>0$. The constant $\tilde C$ is defined in lemma \ref{lemma:ODEdispersion}.

\begin{lemma} \label{lemma:ODEmean}
Assuming the same hypotheses as in lemma \ref{lemma:ODEdispersion}, we have
\[
\Big| 1-\kappa P(\mu)R(\mu) -\frac{d}{dt} \mu \Big| \leq \gamma + \tilde C \kappa D, \quad \forall t \in [0,t_*].
\]
\end{lemma}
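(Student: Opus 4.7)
The plan is to compute $\dot\mu$ directly from \eqref{equation:WinfreeModel} by summing over $i$, then compare term-by-term with $1-\kappa P(\mu)R(\mu)$ using the hypothesis $\delta(t)<D$. Introduce the averages
\[
\bar\omega := \frac{1}{N}\sum_{i=1}^N \omega_i, \qquad A := \frac{1}{N}\sum_{j=1}^N P(x_j), \qquad B := \frac{1}{N}\sum_{i=1}^N R(x_i),
\]
so that $\dot\mu = \bar\omega - \kappa A B$. Hypothesis H2 immediately gives $|1-\bar\omega|<\gamma$, which will account for the $\gamma$ term on the right-hand side.

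Next I would control $AB - P(\mu)R(\mu)$ by $\tilde C D$. The key observation is that $\delta(t)<D$ implies $|x_k-\mu|<D$ for every $k$, since $|x_k-\mu| \le \frac{1}{N}\sum_j |x_k-x_j| \le \delta(t)$. Applying the mean value theorem pointwise and averaging yields
\[
|A - P(\mu)| \le \|P'\|_\infty D, \qquad |B - R(\mu)| \le \|R'\|_\infty D.
\]
Writing the product difference as $AB - P(\mu)R(\mu) = (A-P(\mu))B + P(\mu)(B-R(\mu))$ and bounding $|B|\le \|R\|_\infty$, $|P(\mu)|\le \|P\|_\infty$ produces
\[
|AB - P(\mu)R(\mu)| \le \bigl(\|P'\|_\infty\|R\|_\infty + \|P\|_\infty\|R'\|_\infty\bigr) D = \tilde C\, D.
\]

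Finally, combining these two estimates with the identity
\[
1 - \kappa P(\mu)R(\mu) - \dot\mu = (1-\bar\omega) + \kappa\bigl(AB - P(\mu)R(\mu)\bigr)
\]
and the triangle inequality delivers the desired bound $\gamma + \tilde C \kappa D$. I do not foresee a genuine obstacle here: the estimate is a clean corollary of the dispersion hypothesis plus the Lipschitz control of $P$ and $R$. The only subtlety worth flagging is that it is cheaper to split the product $AB$ via the telescoping trick above than to Taylor expand $P$ and $R$ around $\mu$ — the latter would introduce an extra $D^2$ contribution that is not present in the stated bound.
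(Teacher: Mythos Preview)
Your proof is correct and follows essentially the same route as the paper: both compute $\dot\mu$ by averaging \eqref{equation:WinfreeModel}, isolate the frequency term $1-\bar\omega$ (bounded by $\gamma$), and bound the coupling error by the telescoping split $P(x_k)R(x_i)-P(\mu)R(\mu)=(P(x_k)-P(\mu))R(x_i)+P(\mu)(R(x_i)-R(\mu))$ together with $|x_k-\mu|<D$. The only cosmetic difference is that you average first and telescope on the scalars $A,B$, whereas the paper telescopes inside the double sum before averaging; the resulting estimate and constants are identical.
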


\begin{proof}
We  use indeed the estimates $|x_i-\mu| \leq D$,  $|1-\frac{1}{N} \sum_{k=1}^N \omega_k | \leq \gamma$,
\begin{gather*}
\frac{d\mu}{dt} = 1  - \kappa P(\mu)R(\mu)  - \Big[ 1 - \frac{1}{N} \sum_{k=1}^N \omega_k \Big] + \frac{\kappa}{N} \sum_{k=1}^N E_k, \quad\text{where}\\
E_k = \frac{1}{N} \sum_{i=1}^N \Big[ P(\mu) \big( R(\mu) -R(x_i) \big) - \big( P(x_k) - P(\mu) \big) R(x_i) \Big]. \qedhere
\end{gather*}
\end{proof}

We now assume that $(\gamma,\kappa)$ have been chosen so that
\begin{equation}\label{equation:intermediaiteCondition}
1-\gamma-\tilde C \kappa D -\frac{\kappa}{\kappa_*} >0.
\end{equation} 
Notice that the equation $\frac{d x_i}{dt} = 1  - \kappa P(x_i)R(x_i) $ corresponds to  $\gamma=0$ and identical initial conditions in equation \eqref{equation:WinfreeModel}. By a small perturbation of $\gamma = 0$, and a small perturbation of the initial condition $x_1(0) = \cdots = x_N(0)$, one obtain $\frac{d\mu}{dt} = 1  \pm \epsilon - \kappa P(\mu)R(\mu) $ and $\frac{d}{dt}(x_i-x_j) = \pm\epsilon_{i,j}-\kappa P(\mu)R'(\mu)(x_i-x_j)$. The synchronization hypothesis H3 may be interpreted as a stability hypothesis after changing the time variable $t$ to $s=\mu(t)$.

We now proceed formally. Thanks to the definition of $\kappa_*$ in \eqref{equation:DeafBifurcationParameter},  condition \eqref{equation:intermediaiteCondition}  implies $\frac{d\mu}{dt}>0$ on $[0,t_*]$.  We then  consider the change of variable $s= \mu(t)$ from $[0,t_*]$ to $[s_0,s_*]$ whre $s_0=\mu(0)$ and $s_* = \mu(t_*)$. Let $\tau$ be the inverse function
\begin{equation}\label{eq:inverse_function}
\tau : \left\{\begin{array}{ccc}
[s_0,s_*] & \to & [0,t_*] \\ 
s & \mapsto & \tau (s)\\
\end{array}\right. .
\end{equation}
Define  $x_{i}^{*}(s)=x_i \circ \tau(s)$,  $X^*(s) =(x_1^*(s),\ldots,x_N^*(s))$,  $\delta_{i,j}^*(s) = x_i^*(s)-x_j^*(s)$.
 With respect to the new variable $s$, lemma \ref{lemma:ODEdispersion} admits the following equivalent form.

\begin{lemma}\label{delta}
We assume the same hypotheses as in lemma \ref{lemma:ODEdispersion} and the new condition \eqref{equation:intermediaiteCondition}. Then 
\begin{equation*}\label{delta_f}
\frac{d}{ds} \delta_{i,j}^*(s) < \alpha(\gamma,\kappa,D) - \beta_\kappa(s) \delta_{i,j}^*(s), \quad \forall s \in [s_0,s_*],
\end{equation*}
where 
\begin{gather*}
\alpha(\gamma,\kappa,D) := \frac{2\gamma +C \kappa D^2}{1-\kappa/\kappa_*} + \frac{(2\gamma+C\kappa D^2+\tilde C \kappa D)(\gamma+\tilde C \kappa D)}{(1-\gamma-\tilde C \kappa D -\kappa/\kappa_*)(1-\kappa/\kappa_*)}, \\
\beta_\kappa(s) := \frac{\kappa P(s) R'(s)}{1-\kappa P(s) R(s)}.
\end{gather*}
\end{lemma}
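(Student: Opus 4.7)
\emph{Proof proposal.} The plan is to apply the change of variable $s=\mu(t)$ to the differential inequality for $\delta_{i,j}(t)$ coming from Lemma~\ref{lemma:ODEdispersion}, while carefully accounting for the fact that $d\mu/dt$ is only approximately $1-\kappa P(\mu)R(\mu)$. First, Lemma~\ref{lemma:ODEmean} combined with the definition of $\kappa_*$ gives
\[
\tfrac{d\mu}{dt}\geq \bigl(1-\kappa P(\mu)R(\mu)\bigr)-(\gamma+\tilde C\kappa D) \geq 1-\tfrac{\kappa}{\kappa_*}-\gamma-\tilde C\kappa D,
\]
which is strictly positive under hypothesis~\eqref{equation:intermediaiteCondition}. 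Hence $\mu$ is a $C^1$ diffeomorphism from $[0,t_*]$ onto $[s_0,s_*]$, the inverse $\tau$ and the composed quantity $\delta^*_{i,j}(s)=\delta_{i,j}(\tau(s))$ are well defined, and the chain rule gives $\frac{d\delta^*_{i,j}}{ds}=\frac{d\delta_{i,j}/dt}{d\mu/dt}$ evaluated at $t=\tau(s)$.

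Next, I would set $\epsilon_1(t):=\frac{d\mu}{dt}-(1-\kappa P(\mu)R(\mu))$, so that $|\epsilon_1|\leq\gamma+\tilde C\kappa D$ by Lemma~\ref{lemma:ODEmean}, and use the algebraic identity
\[
\frac{1}{d\mu/dt}=\frac{1}{1-\kappa P(\mu)R(\mu)}-\frac{\epsilon_1}{(1-\kappa P(\mu)R(\mu))(d\mu/dt)}
\]
to split $\frac{d\delta^*_{i,j}}{ds}$ into a principal term and a perturbation term. Dividing the strict inequality of Lemma~\ref{lemma:ODEdispersion} by the positive quantity $1-\kappa P(\mu)R(\mu)$ preserves strictness and gives the principal term bounded by
\[
\frac{d\delta_{i,j}/dt}{1-\kappa P(\mu)R(\mu)}<\frac{2\gamma+C\kappa D^2}{1-\kappa P(\mu)R(\mu)}-\beta_\kappa(\mu)\,\delta^*_{i,j}\leq\frac{2\gamma+C\kappa D^2}{1-\kappa/\kappa_*}-\beta_\kappa(s)\,\delta^*_{i,j}(s),
\]
which produces the first summand of $\alpha(\gamma,\kappa,D)$ together with the desired linear term $-\beta_\kappa(s)\delta^*_{i,j}(s)$.

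Finally, for the perturbation term I would invoke the second (absolute-value) bound of Lemma~\ref{lemma:ODEdispersion}, $|d\delta_{i,j}/dt|\leq 2\gamma+C\kappa D^2+\tilde C\kappa D$, together with $|\epsilon_1|\leq\gamma+\tilde C\kappa D$ and the lower bounds $1-\kappa P(\mu)R(\mu)\geq 1-\kappa/\kappa_*$ and $d\mu/dt\geq 1-\kappa/\kappa_*-\gamma-\tilde C\kappa D$, to obtain
\[
\biggl|\frac{d\delta_{i,j}}{dt}\cdot\frac{\epsilon_1}{(1-\kappa P(\mu)R(\mu))(d\mu/dt)}\biggr|\leq\frac{(2\gamma+C\kappa D^2+\tilde C\kappa D)(\gamma+\tilde C\kappa D)}{(1-\kappa/\kappa_*-\gamma-\tilde C\kappa D)(1-\kappa/\kappa_*)},
\]
which is exactly the second summand of $\alpha(\gamma,\kappa,D)$. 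Adding the two estimates yields the asserted strict inequality $\frac{d\delta^*_{i,j}}{ds}<\alpha(\gamma,\kappa,D)-\beta_\kappa(s)\delta^*_{i,j}(s)$. The only delicate point is the bookkeeping: the \emph{strict} form of Lemma~\ref{lemma:ODEdispersion} must be applied to the piece carrying the linear-in-$\delta^*_{i,j}$ coefficient, so that strictness survives division by $1-\kappa P(\mu)R(\mu)>0$, while the \emph{absolute-value} form is used only on the perturbation produced by replacing $d\mu/dt$ by $1-\kappa P(\mu)R(\mu)$; any other allocation either destroys strictness or fails to reproduce the exact expression of $\alpha(\gamma,\kappa,D)$.
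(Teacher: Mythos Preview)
Your proof is correct and follows essentially the same route as the paper: the paper also writes $\frac{d}{ds}\delta^*_{i,j}(1-\kappa P(\mu)R(\mu)) = \frac{d}{dt}\delta_{i,j} + \frac{d}{ds}\delta^*_{i,j}\bigl(1-\kappa P(s)R(s)-\frac{d\mu}{dt}\bigr)$, divides through, applies the strict bound of Lemma~\ref{lemma:ODEdispersion} to the first piece, and bounds the perturbation via $|\frac{d}{ds}\delta^*_{i,j}|\le \frac{2\gamma+C\kappa D^2+\tilde C\kappa D}{1-\gamma-\tilde C\kappa D-\kappa/\kappa_*}$ together with Lemma~\ref{lemma:ODEmean}. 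Your algebraic identity for $1/(d\mu/dt)$ produces exactly the same two terms, and your bound on the perturbation is identical once one notes that $\frac{d}{ds}\delta^*_{i,j}\,(1-\kappa PR-\frac{d\mu}{dt})=-\frac{(d\delta_{i,j}/dt)\,\epsilon_1}{d\mu/dt}$.
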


\begin{proof}
The chain rule gives $\frac{d}{dt} \delta_{i,j} = \frac{d}{ds} \delta_{i,j}^* \frac{d}{dt}\mu$, or
\[
\frac{d}{ds}  \delta_{i,j}^* \big(1-\kappa P(\mu)R(\mu) \big) = \frac{d}{dt} \delta_{i,j} + \frac{d}{ds} \delta_{i,j}^* \Big( 1-\kappa P(s) R(s) - \frac{d}{dt} \mu \Big).
\]
The definition of $\kappa_*$ implies $1-\kappa P(\mu)R(\mu) \geq 1 - \kappa/\kappa_*>0$. From lemma \ref{lemma:ODEdispersion}, one obtains
\[
\frac{d}{ds}  \delta_{i,j}^* \le \frac{(2\gamma+C\kappa D^2) + \frac{d}{ds} \delta_{i,j}^* \Big( 1-\kappa P(s) R(s) - \frac{d}{dt} \mu \Big)}{1-\kappa P(\mu)R(\mu) }-  \beta_\kappa(s) \delta_{i,j}.
\]
From lemma \ref{lemma:ODEmean}, one obtains
\begin{gather*}
\frac{d}{dt} \mu \geq 1 - \kappa P(\mu)R(\mu) -\gamma -\tilde C \kappa D \geq 1- \gamma  - \tilde C \kappa D -\kappa/\kappa_*, \\
|\frac{d}{ds} \delta_{i,j}^*| \leq \frac{2\gamma +C\kappa D^2 + \tilde C \kappa D}{1- \gamma  - \tilde C \kappa D -\kappa/\kappa_*}. \qedhere
\end{gather*}
\end{proof}

Let $\Delta_{\gamma,\kappa,D}(s)$ be the unique $C^2$ $2\pi$-periodic solution of the equation 
\[
\frac{d \Delta}{ds} = \alpha(\gamma,\kappa,D) -\beta_\kappa(s) \Delta
\]
given by lemma \ref{lemma:linearWinfree}. The following lemma gives  sufficient conditions on $(\gamma,\kappa,D)$ so that  $\max_s \Delta_{\gamma,\kappa,D}(s) < D$.

\begin{lemma}\label{lemma:definitionOpenSet}
There exists an open set $U$ of parameters $(\gamma,\kappa)$ whose closure contains $\{0\} \times [0,\kappa_*]$, defined in the following way
\begin{gather*}
U := \{ (\gamma,\kappa) \in\ (0,1) \times (0,\kappa_*) \,:\, 0 < \gamma < \kappa D^2(\kappa) \}, \quad\text{where} \\
D(\kappa) := \min \Big(1, \frac{ L(\kappa) }{2(2+C)/(1-\kappa/\kappa_*)+ 2\tilde C (1+ \tilde C) \kappa/(1-\kappa/\kappa_*)^2} \Big), \notag \\
L(\kappa) :=  \frac{1-\exp(-\int_0^{2\pi} \beta_\kappa(s) \,ds)}{2\pi \kappa \exp(\int_0^{2\pi} \beta_\kappa^-(s) \,ds)}, \label{equation:definitionOpenSet}
\end{gather*}
such that, for every $(\gamma,\kappa) \in U$,
\[
1-\gamma -\tilde C \kappa D(\kappa) -\frac{\kappa}{\kappa_*} >0, \quad\text{and}\quad \max_{s\in[0,2\pi]}\Delta_{\gamma,\kappa,D(\kappa)}(s) < D(\kappa). 
\]
\end{lemma}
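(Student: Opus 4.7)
The plan is to apply Lemma~\ref{lemma:linearWinfree} to the affine ODE of Lemma~\ref{delta}, with $\alpha=\alpha(\gamma,\kappa,D)$ and $\beta=\beta_\kappa$. Hypothesis~H3 ensures $\int_0^{2\pi}\beta_\kappa(s)\,ds>0$ on $(0,\kappa_*)$, so Lemma~\ref{lemma:linearWinfree} produces a unique positive $2\pi$-periodic solution $\Delta_{\gamma,\kappa,D}$ with
\[
\max_{s\in[0,2\pi]}\Delta_{\gamma,\kappa,D}(s)\le \frac{\alpha(\gamma,\kappa,D)}{\kappa L(\kappa)},
\]
the factor $\kappa$ in $\beta_\kappa$ being absorbed into $L(\kappa)$. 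Writing $M(\kappa):=2(2+C)/(1-\kappa/\kappa_*)+2\tilde C(1+\tilde C)\kappa/(1-\kappa/\kappa_*)^2$, so that $D(\kappa)M(\kappa)\le L(\kappa)$ by the definition of $D(\kappa)$, the claim reduces to verifying (a) the positivity $1-\gamma-\tilde C\kappa D-\kappa/\kappa_*>0$ (which was already implicitly required to invoke Lemmas~\ref{lemma:ODEmean} and~\ref{delta}), and (b) the contraction $\alpha(\gamma,\kappa,D)<\kappa M(\kappa)D^2$.

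For (a), I would combine $\gamma<\kappa D^2\le\kappa D$ (using $D\le 1$) with $D(\kappa)\le L(\kappa)/M(\kappa)$ and an elementary upper bound on $L(\kappa)$, obtained from $1-e^{-x}\le x$ together with $\|\beta_\kappa\|_\infty\le\kappa\|PR'\|_\infty/(1-\kappa/\kappa_*)$, to show $\gamma+\tilde C\kappa D\le(1-\kappa/\kappa_*)/2$. This yields $1-\gamma-\tilde C\kappa D-\kappa/\kappa_*\ge(1-\kappa/\kappa_*)/2$, which both establishes the positivity claim and provides a clean lower bound for the denominator appearing in the second summand of $\alpha$.

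For (b), the main task is a bookkeeping estimate. Using $\gamma<\kappa D^2$ and $D\le 1$, I bound the first summand of $\alpha$ by a constant multiple of $\kappa D^2/(1-\kappa/\kappa_*)$; for the second summand, each of the two factors in the numerator produces a factor of $\kappa D$, while the denominator is controlled below by $(1-\kappa/\kappa_*)^2/2$ via step~(a). A careful grouping matches the constants in $M(\kappa)$, giving $\alpha(\gamma,\kappa,D)<\kappa M(\kappa)D^2$. Combined with $D(\kappa)M(\kappa)\le L(\kappa)$, this yields $\alpha/(\kappa L(\kappa))<D$, with strict inequality inherited from $\gamma<\kappa D^2$, which is exactly the desired bound on $\max_s\Delta_{\gamma,\kappa,D(\kappa)}$.

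Finally, the topological claim follows from continuity. Both $L(\kappa)$ and $M(\kappa)$ are positive and continuous on $(0,\kappa_*)$ (using H3 for $L>0$ and the uniform bound on $1/(1-\kappa PR)$), so $\kappa\mapsto\kappa D(\kappa)^2$ is continuous and strictly positive on $(0,\kappa_*)$, hence $U$ is open. For any $\kappa_0\in(0,\kappa_*)$ the point $(0,\kappa_0)\in\overline U$ is reached by letting $\gamma\to 0^+$ with $\kappa$ fixed. The endpoints $\kappa_0\in\{0,\kappa_*\}$ are handled by diagonal sequences $(\kappa_n D(\kappa_n)^2/2,\kappa_n)$, using $\kappa D(\kappa)^2\le\kappa\to 0$ as $\kappa\to 0$, and $D(\kappa)\to 0$ as $\kappa\to\kappa_*$ thanks to the factor $(1-\kappa/\kappa_*)^2$ in $M(\kappa)$. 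The main obstacle is the algebraic bookkeeping in step~(b) that matches $\alpha$ with the precise form of $M(\kappa)$; once the estimate is in place, the closure statement is a short continuity argument.
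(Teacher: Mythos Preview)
Your proposal is correct and follows essentially the same route as the paper: reduce to the inequality $\alpha(\gamma,\kappa,D(\kappa))/(\kappa L(\kappa))<D(\kappa)$ via Lemma~\ref{lemma:linearWinfree}, first establish $\gamma+\tilde C\kappa D<\tfrac12(1-\kappa/\kappa_*)$, then use this to bound the two summands of $\alpha$ by $\kappa D^2 M(\kappa)$, and conclude from $D M\le L$. You are in fact more explicit than the paper on two points: you justify the intermediate bound $D(\kappa)\le (1-\kappa/\kappa_*)/(2\kappa(1+\tilde C))$ by bounding $L(\kappa)$ through $1-e^{-x}\le x$ and $\|PR'\|_\infty\le\tilde C$ (the paper simply asserts this inequality), and you spell out the closure argument for $\{0\}\times[0,\kappa_*]\subset\overline U$, which the paper leaves implicit.
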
 

\begin{proof}
Thanks to lemma \ref{lemma:linearWinfree}, it is enough to check that
\[
\frac{\alpha(\gamma,\kappa,D(\kappa))}{\kappa  L(\kappa)} < D(\kappa).
\]
We have
\begin{gather*}
D(\kappa) < \frac{1-\kappa/\kappa_*}{2\kappa(1+\tilde C)}, \quad 
\gamma+\tilde C \kappa  D(\kappa) < \kappa D(\kappa) (1+\tilde C) < \frac{1}{2} \Big( 1-\frac{\kappa}{\kappa_*} \Big), \\
1- \gamma - \tilde C \kappa D(\kappa) -\kappa/\kappa_* >  \frac{1}{2} \Big( 1-\frac{\kappa}{\kappa_*} \Big), \\
\alpha(\gamma,\kappa,D(\kappa)) < \frac{2(2\gamma+C \kappa D(\kappa)^2)}{1-\kappa/\kappa_*} + \frac{\tilde C \kappa D(\kappa)
(\gamma+\tilde C \kappa D(\kappa))}{\frac{1}{2}(1-\kappa/\kappa_*)^2}, \\
< \kappa D(\kappa)^2 \Big[ \frac{2(2+C)}{1-\kappa/\kappa_*} + \frac{2\tilde C (1+\tilde C) \kappa}{(1-\kappa/\kappa_*)^2} \Big] < \kappa D(\kappa) L(\kappa). \qedhere
\end{gather*}
\end{proof}

\begin{definition}\label{dispersion_curve}
We call {\it  dispersion curve}  the periodic function 
\[
\Delta_{\gamma,\kappa}(s) : = \Delta_{\gamma,\kappa,D(\kappa)}(s), \quad \forall s \in \mathbb{R}.
\]
\end{definition}

We prove the first part of Theorem \ref{mainresult}. The set $C^N_{\gamma,\kappa}$ defined in Theorem \ref{mainresult} is obviously open and   has the shape of a tubular neighborhood about the line $(1,1,\ldots,1)\mathbb{R}$ with bounded convex transverse section $\mu(X)=\mu_0$. We want to prove that $C^N_{\gamma,\kappa}$ is invariant by the flow of \eqref{equation:WinfreeModel}. We recall that $U$ denotes the set of parameters $(\gamma,\kappa)$ defined in lemma \ref{lemma:definitionOpenSet}.

\begin{proof}[Proof of Theorem \ref{mainresult} - Item 1]
Let be $X(0)\in C_{\gamma,\kappa}^N$ and 
\[
t_* := \sup \{t \geq 0 : \forall\ 0< t' < t, \ \delta(t') <\Delta_{\gamma,\kappa}(\mu(t')) \}.
\]
Assume by contradiction that $t_* < +\infty$. We use the change of variable $s=\mu(t)$ for every $s\in [s_0,s_*]$, $s_0=\mu(0)$ and $s_*=\mu(t_*)$.  Then there exist $1 \le i_0, j_0\le N$ such that $\delta^*_{i_0,j_0}(s_*)=\Delta_{\gamma,\kappa}(s_*)$. Since $\max \Delta_{\gamma,\kappa} < D(\kappa)$ and $1-\gamma-\tilde C \kappa D(\kappa) - {\kappa}/{\kappa_*} >0$,  lemma \ref{delta} implies
\begin{align*}
\frac{d}{ds}\delta^*_{i_{0},j_{0}}(s_*) &< \alpha(\gamma,\kappa,D(\kappa)) - \beta_\kappa(s) \delta^*_{i_{0},j_{0}}(s_*), \\
&= \alpha(\gamma,\kappa,D(\kappa)) - \beta_\kappa(s) \Delta_{\gamma,k}(s_*)= \frac{d}{ds} \Delta_{\gamma,\kappa}(s_*).
\end{align*}
There exists $s< s_*$ close enough to $s_*$ such that $\delta^*_{i_{0},j_{0}}(s)> \Delta_{\gamma,\kappa}(s)$ or in other words there exists $t < t_*$ close enough to $t_*$ such that $\delta_{i_0,j_0}(t) > \Delta_{\gamma,\kappa}(\mu(t))$. We have obtained a contradiction. 
\end{proof}

We now prove the second part of Theorem \ref{mainresult}. We  show   there exists a periodically locked  solution $X(t)=(x_1(t),\ldots,x_N(t))$ of the system \eqref{equation:WinfreeModel} with some initial condition in $C_{\gamma,\kappa}^N$, that is, there exists a common rotation number $\Omega_{\gamma,\kappa}>0$ such that  $x_i(t)=\Omega_{\gamma,\kappa}t+\Psi_{i}(t)$, $\forall i=1,\cdots,N$ where $\Psi_i(t)$ are periodic functions of period $2\pi/\Omega_{\gamma,\kappa}$. Our strategy consists in constructing,  by fixing the mean of $X(0)$,  a compact and convex transverse section $\Sigma_{\gamma,\kappa}$ to the closure $\bar{C}_{\gamma,\kappa}^N$, and a continuous Poincar\'e map $P_{\gamma,\kappa} : \Sigma_{\gamma,\kappa} \to \Sigma_{\gamma,\kappa}$ by waiting the first time the mean of $X(t)$ return to 0. We then use Brouwer fixed point theorem to prove the existence of a fixed point of $P_{\gamma,\kappa}$. We denote by $\Phi^t(X)$ the flow of the equation \eqref{equation:WinfreeModel}.

\begin{lemma}\label{Poincare}
 Let be $(\gamma,\kappa) \in U$ where $U$ is defined in lemma \ref{lemma:definitionOpenSet}. Define
\[
\Sigma_{\gamma,\kappa}=\{X\in C_{\gamma,\kappa}^N \,:\, \mu(X)=0\} \quad\text{and}\quad \mathbb{1}=(1, \cdots, 1).
\]
Then there exist a $C^2$ map (the Poincar\'e map) $P_{\gamma,\kappa} : \Sigma_{\gamma,\kappa} \to \Sigma_{\gamma,\kappa}$ and a $C^2$ function (the return time map) $\theta_{\gamma,\kappa} : \Sigma_{\gamma,\kappa} \to \mathbb{R}^+$ such that, 
\begin{gather*}
\Phi^{\theta(X)}(X) = P_{\gamma,\kappa}(X) + 2\pi \mathbb{1}, \quad \forall X \in \Sigma_{\gamma,\kappa}, \\
\frac{2\pi}{1+\gamma+\kappa \|P\|_\infty \|R\|_\infty} < \theta(X) < \frac{2\pi}{1-\gamma - \tilde C \kappa D(\kappa)  -\frac{\kappa}{\kappa_*}}.
\end{gather*}
\end{lemma}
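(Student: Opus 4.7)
My plan is to define $\theta_{\gamma,\kappa}(X)$ as the unique first positive time at which the mean of the trajectory of $X$ reaches the value $2\pi$, and then to set $P_{\gamma,\kappa}(X) := \Phi^{\theta_{\gamma,\kappa}(X)}(X) - 2\pi \mathbb{1}$. The whole construction rests on an observation already central to Item 1 of Theorem \ref{mainresult}: on the forward invariant open set $C_{\gamma,\kappa}^N$ the function $t\mapsto\mu(t)$ is strictly increasing. I would begin by pinning this down quantitatively: along any trajectory $X(t)\in C_{\gamma,\kappa}^N$ one has
\[
m_- := 1-\gamma-\tilde C\kappa D(\kappa)-\kappa/\kappa_* \ \leq\ \dot\mu(t)\ \leq\ 1+\gamma+\kappa\|P\|_\infty\|R\|_\infty =: M_+,
\]
the lower bound being lemma \ref{lemma:ODEmean} combined with \eqref{equation:DeafBifurcationParameter} and $|x_i-\mu|<D(\kappa)$ inside $C_{\gamma,\kappa}^N$, and the upper bound coming from averaging \eqref{equation:WinfreeModel} and using $\omega_i<1+\gamma$. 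The crucial strict positivity $m_->0$ is exactly the first conclusion of lemma \ref{lemma:definitionOpenSet}.

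Since $\dot\mu\geq m_->0$ on an invariant set, $\mu(\Phi^t(X))$ grows at least linearly to $+\infty$, so for every $X\in\Sigma_{\gamma,\kappa}$ there exists a unique $\theta(X)>0$ with $\mu(\Phi^{\theta(X)}(X))=2\pi$; integrating $m_-\leq\dot\mu\leq M_+$ on $[0,\theta(X)]$ yields immediately
\[
\frac{2\pi}{M_+} \leq \theta(X) \leq \frac{2\pi}{m_-},
\]
both inequalities being strict because $\dot\mu$ cannot remain identically at an extremal value over a whole interval of time. To check that $P_{\gamma,\kappa}$ sends $\Sigma_{\gamma,\kappa}$ into itself, I observe that $\mu(P_{\gamma,\kappa}(X))=0$ by construction, and that $C_{\gamma,\kappa}^N$ is invariant under the translation $Y\mapsto Y-2\pi\mathbb{1}$: the differences $y_i-y_j$ are preserved, the mean changes by $-2\pi$, and the dispersion curve $\Delta_{\gamma,\kappa}$ is $2\pi$-periodic by definition \ref{dispersion_curve}. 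Since $\Phi^{\theta(X)}(X)\in C_{\gamma,\kappa}^N$ by forward invariance, this gives $P_{\gamma,\kappa}(X)\in C_{\gamma,\kappa}^N\cap\{\mu=0\}=\Sigma_{\gamma,\kappa}$.

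Finally, the $C^2$ regularity of $\theta_{\gamma,\kappa}$ and hence of $P_{\gamma,\kappa}$ would follow from the implicit function theorem applied to $F(X,t):=\mu(\Phi^t(X))-2\pi$: since $P,R\in C^2$, the flow $\Phi^t$ is $C^2$ jointly in $(X,t)$, and the transversality condition $\partial_tF(X,\theta(X))=\dot\mu(\Phi^{\theta(X)}(X))\geq m_->0$ delivers a $C^2$ implicit solution $\theta_{\gamma,\kappa}$, whence $P_{\gamma,\kappa}$ is $C^2$ as well. I do not expect any real obstacle in this lemma: the only non-trivial input is the strictly positive lower bound $m_->0$ which is already secured by lemma \ref{lemma:definitionOpenSet}, and everything else is bookkeeping built from the implicit function theorem, the trivial upper bound on $\dot\mu$, and the $2\pi$-periodicity of the geometry of $C_{\gamma,\kappa}^N$.
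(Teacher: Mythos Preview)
Your proposal is correct and follows essentially the same approach as the paper: both define $\theta(X)$ as the first time $\mu(\Phi^t(X))=2\pi$, obtain the two-sided bound on $\dot\mu$ (lower bound via lemma~\ref{lemma:ODEmean} and the definition of $\kappa_*$, upper bound directly from averaging \eqref{equation:WinfreeModel}), set $P_{\gamma,\kappa}(X)=\Phi^{\theta(X)}(X)-2\pi\mathbb{1}$, and check membership in $\Sigma_{\gamma,\kappa}$ using the $2\pi$-periodicity of $\Delta_{\gamma,\kappa}$. Your treatment is in fact slightly more explicit than the paper's, since you spell out the implicit function theorem argument for the $C^2$ regularity of $\theta_{\gamma,\kappa}$, which the paper leaves implicit.
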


\begin{proof}
Let be $X \in C_{\gamma,\kappa}^N$ such that $\mu(X)=0$. Let be  $\mu(t) := \mu(\Phi^t(X))$ and $\tau(s)$ be the inverse function of $\mu(t)$ as it has been defined in \eqref{equation:mudelta} and \eqref{eq:inverse_function}. We prefer to write the explicit dependence on $X$: $\mu_X(t) = \mu(t)$ and $\tau_X(s) =\tau(s)$. Thanks to lemma \ref{lemma:ODEmean}, we obtain
\[
1-\gamma - \tilde C \kappa D(\kappa)- \frac{\kappa}{\kappa_*}   < \dot\mu_X(t) < 1+ \gamma + \kappa \|P\|_\infty \|R\|_\infty.
\]
Define $\theta(X) := \tau_X(2\pi)$. Then  $\int_0^{\tau_X(2\pi)} \dot\mu_X(t) dt = 2\pi$ implies the second estimate of the lemma. Let be $P_{\gamma,\kappa}(X) := \Phi^{\theta(X)}(X) - 2\pi \mathbb{1}$. Then
\begin{gather*}
\mu(P_{\gamma,\kappa}(X)) = \mu_X(\theta(X)) -2\pi = \mu_X \circ \tau_X(2\pi)) - 2\pi = 0, \\
\delta(P_{\gamma,\kappa}(X)) = \delta(\Phi^{\theta(X)}(X)) < \Delta_{\gamma,\kappa}(\mu_X(\theta(X)) =  \Delta_{\gamma,\kappa}(2\pi)=\Delta_{\gamma,\kappa}(0).
\end{gather*}
We have shown that $P_{\gamma,\kappa}$ is a map from $\Sigma_{\gamma,\kappa}$ into itself.
\end{proof}

\begin{corollary}\label{fixedpoint}
The Poincar\'e map $P_{\gamma,\kappa}$ defined in lemma  \ref{Poincare} admits a fixed point $X_* \in \Sigma_{\gamma,\kappa}$.
\end{corollary}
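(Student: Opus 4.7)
The strategy, as announced in the paragraph preceding Lemma \ref{Poincare}, is to apply Brouwer's fixed point theorem to a continuous extension of $P_{\gamma,\kappa}$ to the closure $\bar{\Sigma}_{\gamma,\kappa}$. The first task is to verify that $\bar{\Sigma}_{\gamma,\kappa}$ is a non-empty compact convex set. By Definition \ref{dispersion_curve} and the explicit form of $C_{\gamma,\kappa}^N$ in Theorem \ref{mainresult},
\[
\bar{\Sigma}_{\gamma,\kappa} = \Big\{ X \in \mathbb{R}^N : \mu(X) = 0, \ \max_{i,j}|x_i-x_j| \leq \Delta_{\gamma,\kappa}(0) \Big\}.
\]
The constraint $\mu(X)=0$ is affine and each $|x_i-x_j|$ is convex, so $\bar{\Sigma}_{\gamma,\kappa}$ is convex as an intersection of convex sublevel sets with a hyperplane. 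It is visibly closed, and it is bounded: $\mu(X)=0$ together with $|x_i - x_j| \leq \Delta_{\gamma,\kappa}(0)$ for all $i,j$ forces $|x_i| \leq \Delta_{\gamma,\kappa}(0)$ for every $i$. Hence $\bar{\Sigma}_{\gamma,\kappa}$ is compact, and since $0 \in \bar{\Sigma}_{\gamma,\kappa}$ it is non-empty.

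The next step is to extend $P_{\gamma,\kappa}$ continuously to $\bar{\Sigma}_{\gamma,\kappa}$. The flow $\Phi^t$ is globally $C^2$ in $(t,X)$ because the vector field in \eqref{equation:WinfreeModel} is $C^2$ on $\mathbb{R}^N$. The invariance of $C_{\gamma,\kappa}^N$ proved in Item 1 of Theorem \ref{mainresult} passes to the closure by approximation: if $X_n \in C_{\gamma,\kappa}^N$ converges to $X$, then $\Phi^t(X_n)\to\Phi^t(X)$ with $\Phi^t(X_n)\in C_{\gamma,\kappa}^N$, whence $\Phi^t(X)\in \bar{C}_{\gamma,\kappa}^N$. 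On $\bar{C}_{\gamma,\kappa}^N$ one has $\delta \leq \max \Delta_{\gamma,\kappa} < D(\kappa)$, so Lemma \ref{lemma:ODEmean} combined with Lemma \ref{lemma:definitionOpenSet} yields $\dot{\mu} \geq 1 - \gamma - \tilde{C}\kappa D(\kappa) - \kappa/\kappa_* > 0$ along every trajectory starting in $\bar{\Sigma}_{\gamma,\kappa}$. The implicit function theorem then provides a $C^2$ extension of the return time $\theta$, and hence a $C^2$ extension $\bar{P}_{\gamma,\kappa}$ of the Poincar\'e map, to a neighborhood of $\bar{\Sigma}_{\gamma,\kappa}$. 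For any $X \in \bar{\Sigma}_{\gamma,\kappa}$ approximated by $X_n \in \Sigma_{\gamma,\kappa}$, continuity forces $\bar{P}_{\gamma,\kappa}(X) = \lim_n P_{\gamma,\kappa}(X_n) \in \bar{\Sigma}_{\gamma,\kappa}$, so the extension is a self-map.

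Brouwer's fixed point theorem applied to $\bar{P}_{\gamma,\kappa}$ on the non-empty compact convex set $\bar{\Sigma}_{\gamma,\kappa}$ then produces $X_* \in \bar{\Sigma}_{\gamma,\kappa}$ with $\bar{P}_{\gamma,\kappa}(X_*) = X_*$. To guarantee that $X_*$ actually lies in the open set $\Sigma_{\gamma,\kappa}$, I would argue as follows: if $X \in \bar{\Sigma}_{\gamma,\kappa}$ satisfies $\delta^*_{i_0,j_0}(0)= \Delta_{\gamma,\kappa}(0)$ for some pair $(i_0,j_0)$, then Lemma \ref{delta} gives $\frac{d}{ds}\delta^*_{i_0,j_0}(0) < \frac{d}{ds}\Delta_{\gamma,\kappa}(0)$, so the trajectory enters $C_{\gamma,\kappa}^N$ strictly for $s > 0$ small; the finitely many active pairs are handled by taking a common positive time. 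The open invariance of Item 1 then keeps the trajectory inside $C_{\gamma,\kappa}^N$ up to time $\theta(X)$, and since $C_{\gamma,\kappa}^N$ is translation-invariant under $2\pi\mathbb{1}$-shifts (because $\Delta_{\gamma,\kappa}$ is $2\pi$-periodic and $\delta$ is $\mathbb{1}$-invariant), we conclude $\bar{P}_{\gamma,\kappa}(X)\in \Sigma_{\gamma,\kappa}$. Therefore $\bar{P}_{\gamma,\kappa}(\bar{\Sigma}_{\gamma,\kappa}) \subset \Sigma_{\gamma,\kappa}$ and the fixed point $X_*$ belongs to $\Sigma_{\gamma,\kappa}$. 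The delicate step is this last one: the Brouwer output, which a priori could sit on the boundary, must be shown to be pushed strictly inside by the flow, and this relies on the strict inequality in Lemma \ref{delta} at boundary-touching trajectories.
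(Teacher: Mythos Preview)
Your argument is correct and follows the same approach as the paper: apply Brouwer's theorem to the continuous extension of $P_{\gamma,\kappa}$ on the compact convex set $\bar\Sigma_{\gamma,\kappa}$, and then use the strict differential inequality of Lemma~\ref{delta} at a boundary-touching pair to rule out fixed points on $\partial\bar\Sigma_{\gamma,\kappa}$. The paper phrases this last step as a contradiction (a boundary fixed point would flow strictly inside $C_{\gamma,\kappa}^N$ and could not return to the boundary at time $\theta(X_*)$), whereas you prove the slightly stronger fact $\bar P_{\gamma,\kappa}(\bar\Sigma_{\gamma,\kappa})\subset\Sigma_{\gamma,\kappa}$; the underlying mechanism is identical. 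Your added details on compactness, convexity, and the continuous extension of the Poincar\'e map fill in points that the paper states without proof.
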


\begin{proof}
$\bar \Sigma_{\gamma,\kappa}$ is compact and convex; $P_{\gamma,\kappa} : \bar\Sigma_{\gamma,\kappa} \to \bar\Sigma_{\gamma,\kappa}$ is continuous. By Brouwer fixed point theorem, $P_{\gamma,\kappa}$ admits a fixed point in $X_* \in \bar\Sigma_{\gamma,\kappa}$. We claim  that $X_* \not\in \partial \bar\Sigma_{\gamma,\kappa}$.  Suppose by contradiction $X_* \in \partial \bar\Sigma_{\gamma,\kappa}$. Let be
\[
\delta_{i,j}^*(t) := x_i^*(t) -x_j^*(t), \quad \delta_*(t) := \max_{i,j}\delta_{i,j}^*(t) \ \ \text{and} \ \  \mu_*(t) := \frac{1}{N}\sum_{i=1}^N x_i^*(t).
\] 
Then there  exist $1 \leq i_0,j_0 \leq N$, such that  $\delta_{i_0,j_0}^*(0)=\Delta_{\gamma,\kappa}(0)$. As in the proof of item 1 of Theorem \ref{mainresult},   there exists $t_0>0$ small enough such that,  for every $0<t<t_0$, $\delta_{i_0,j_0}^*(t) < \Delta_{\gamma,\kappa}(\mu_*(t))$.  By repeating this argument for every $1 \leq i_1,j_1 \leq N$ satisfying the equality $\delta_{i_1,j_1}^*(t)=\Delta_{\gamma,\kappa}(\mu_*(t))$, we  obtain for some $t'>0$, $\delta_*(t') < \Delta_{\gamma,\kappa}(\mu_*(t'))$. The invariance of $C_{\gamma,\kappa}^N$ implies   $\delta_*(t) < \Delta_{\gamma,\kappa}(\mu_*(t))$ for every $t > t'$. We have obtained a contradiction with the fact that
\[
\delta_*(\theta(X_*)) = \delta_*(0) = \Delta_{\gamma,\kappa}(0) =   \Delta_{\gamma,\kappa}(2\pi)= \Delta_{\gamma,\kappa}(\mu_*(\theta(X_*)). \qedhere
\]
\end{proof}
\begin{proof}[Proof of Theorem \ref{mainresult} - Item 2]
Corollary \ref{fixedpoint} implies the existence of a point $X_* \in C_{\gamma,\kappa}^N$ and a time $\theta_*>0$ such that $\Phi^{\theta_*}(X_*) = X_* + 2\pi\mathbb{1}$. By the  uniqueness property of the solutions  of an ordinary differential equation, we have
\[
\Phi^{\theta_*+t}(X_*) = \Phi^t(X_*)+2\pi\mathbb{1}, \quad \forall t \geq 0.
\]
Let be   $\Psi(s) := \Phi^s(X_*) - \frac{2\pi s}{\theta_*}\mathbb{1}=(\Psi_1(s),\cdots,\Psi_N(s))$,  $\forall s\geq 0$. Then $\Psi$ is periodic of period $\theta_*$. We have indeed
\begin{align*}
\Psi(s+\theta_*) &= \Phi^{s + \theta_*}(X_*) - 2\pi \frac{s+\theta_*}{\theta_*}\mathbb{1} = \Phi^s(X_*) + 2 \pi\mathbb{1} - 2\pi \frac{s+\theta_*}{\theta_*}\mathbb{1} = \Psi(s).
\end{align*}
Moreover the return time $\theta_*$  is uniformly bounded from above with respect to $N$  as in lemma \ref{Poincare}.
\end{proof}

\section{Numerical results}
\label{section:NumericalResults}

The study of organized populations has been modeled by Winfree in \cite{WinfreeModel} in 1967. He observed that a week coupling between independent individuals tends to synchronize  them. An example of biological synchronization is given by flashes of tropical fireflies.  The synchronization appears when the individuals emit their flashes simultaneously with the same frequency. In this section we analyze a simplified Winfree model  given by equation \eqref{equation:NumericalWinfreeModel} with $P_\beta(x) = 1+\cos(x+\beta)$ and $R(x) = \sin(x)$.

 When $\beta=0$, the synchronization hypothesis  H3 is satisfied. Indeed 
\begin{gather*}
\frac{d}{ds} \Big( \ln (1-\kappa P_0 R) \Big) = \frac{-\kappa(P_{0}'R+P_{0}R')}{1-\kappa P_0R}, 
\end{gather*}
and
\begin{multline*}
\int_0^{2\pi}\!\! \frac{P_0R'}{1-\kappa P_0R} \,ds = -\int_0^{2\pi}\!\! \frac{P_0'R}{1-\kappa P_0R} \, ds \\
= \int_0^{2\pi}\!\! \frac{\sin^2(s)}{1-\kappa(1+\cos(s))\sin(s)} \,ds > \frac{\pi}{3}.
\end{multline*}
The numerical value of $ \kappa_* $ is $ \frac{4}{3\sqrt{3}} \sim 0.769$ and the two constants $C$ and  $\tilde C$ of lemmas \ref{lemma:ODEdispersion} and \ref{lemma:ODEmean} are bounded from above by $3$. The theoretical  domain of parameters $U$ for which the  Winfree model is synchronized is built in lemma \ref{lemma:definitionOpenSet} using $D(\kappa)$ and $L(\kappa)$. Its size is very small, $10^{-3}$ smaller than the size of the numerical domain one can expect as in figure \ref{031a}.

In order to analyze numerically the hypothesis H3,  we  discuss the model \eqref{equation:NumericalWinfreeModel} for different values of $\beta\in [0,\pi]$. We use  two different order parameters to measure the synchronization
\[
r_X(\beta) := \Big| \frac{1}{N} \sum_{j=1}^N e^{ix_j(T)} \Big|, \quad d_X(T) := \max_{i} |x_i(T)-\mu(T)|.
\]
Figure \ref{img031} shows the numerical domains $U_\beta$ of synchronization for three different values of $\beta$. The domain decreases along the $\gamma$-direction as $\beta$ increases to $\frac{\pi}{2}$ as we shall see. The critical parameter giving the transition to the death state is defined as in equation \eqref{equation:DeafBifurcationParameter} by $\kappa_*(\beta) := \max\{ P_\beta(x)R(x) : x \in \mathbb{R} \}^{-1}$.

\begin{figure}[h!]
\centering
\begin{subfigure}[t]{0.3\textwidth}
\centering \includegraphics[width=\textwidth]{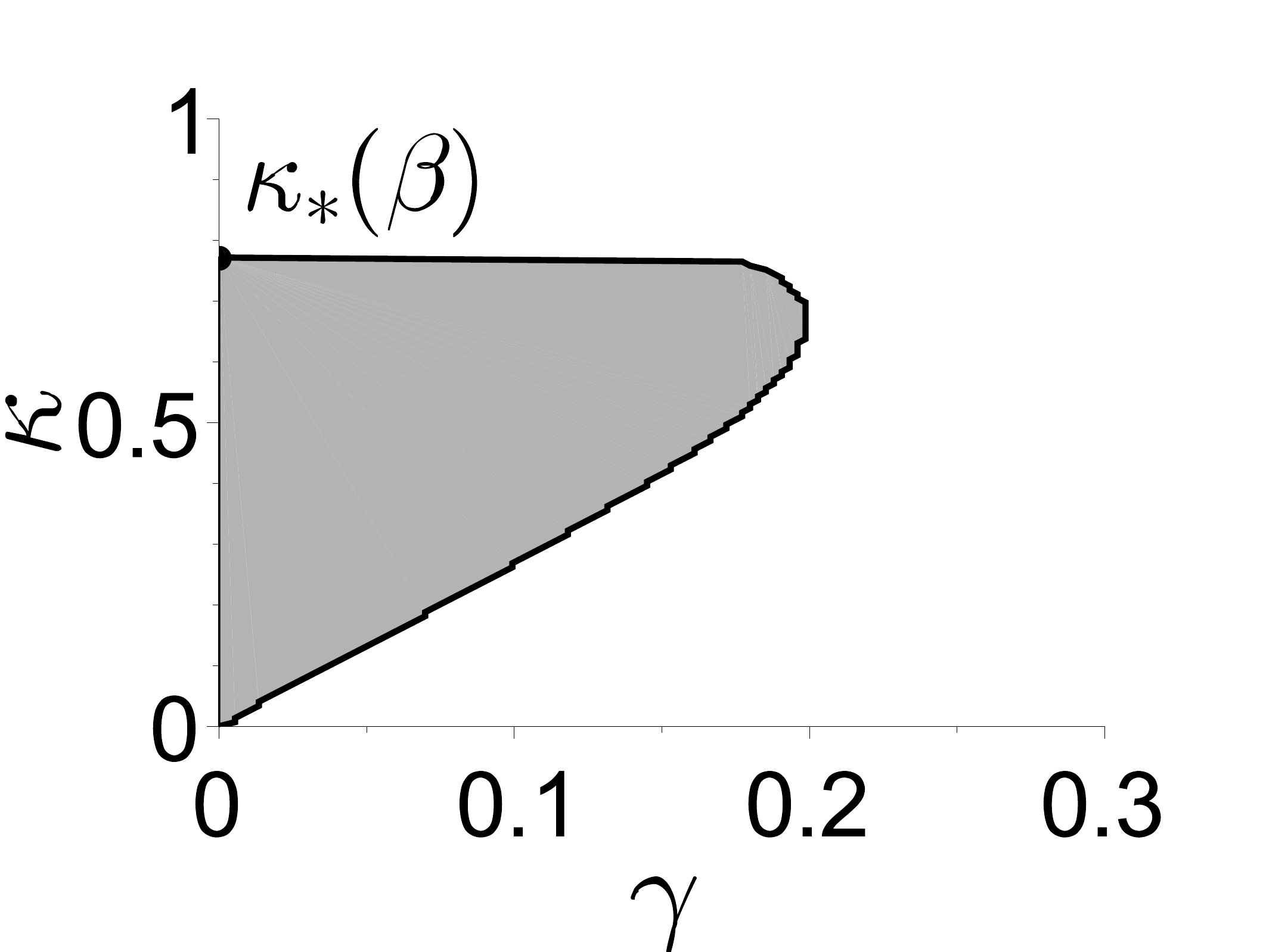}
\caption{$\beta = 0$}
\label{031a}
\end{subfigure}
~
\begin{subfigure}[t]{0.3\textwidth}
\centering \includegraphics[width=\textwidth]{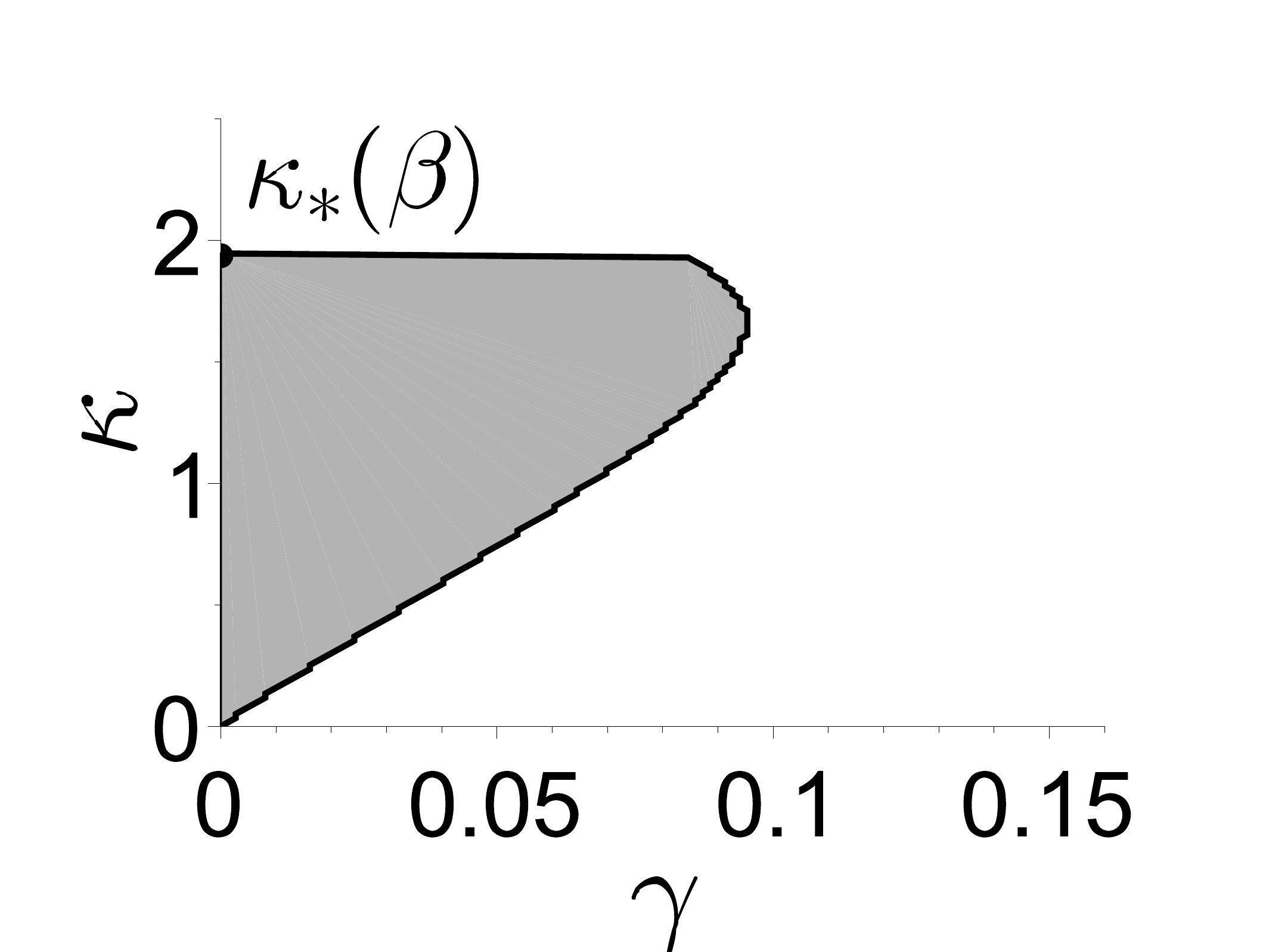}
\caption{$\beta = \frac{\pi}{2}-0.5$}
\label{031b}
\end{subfigure}
~
\begin{subfigure}[t]{0.3\textwidth}
\centering \includegraphics[width=\textwidth]{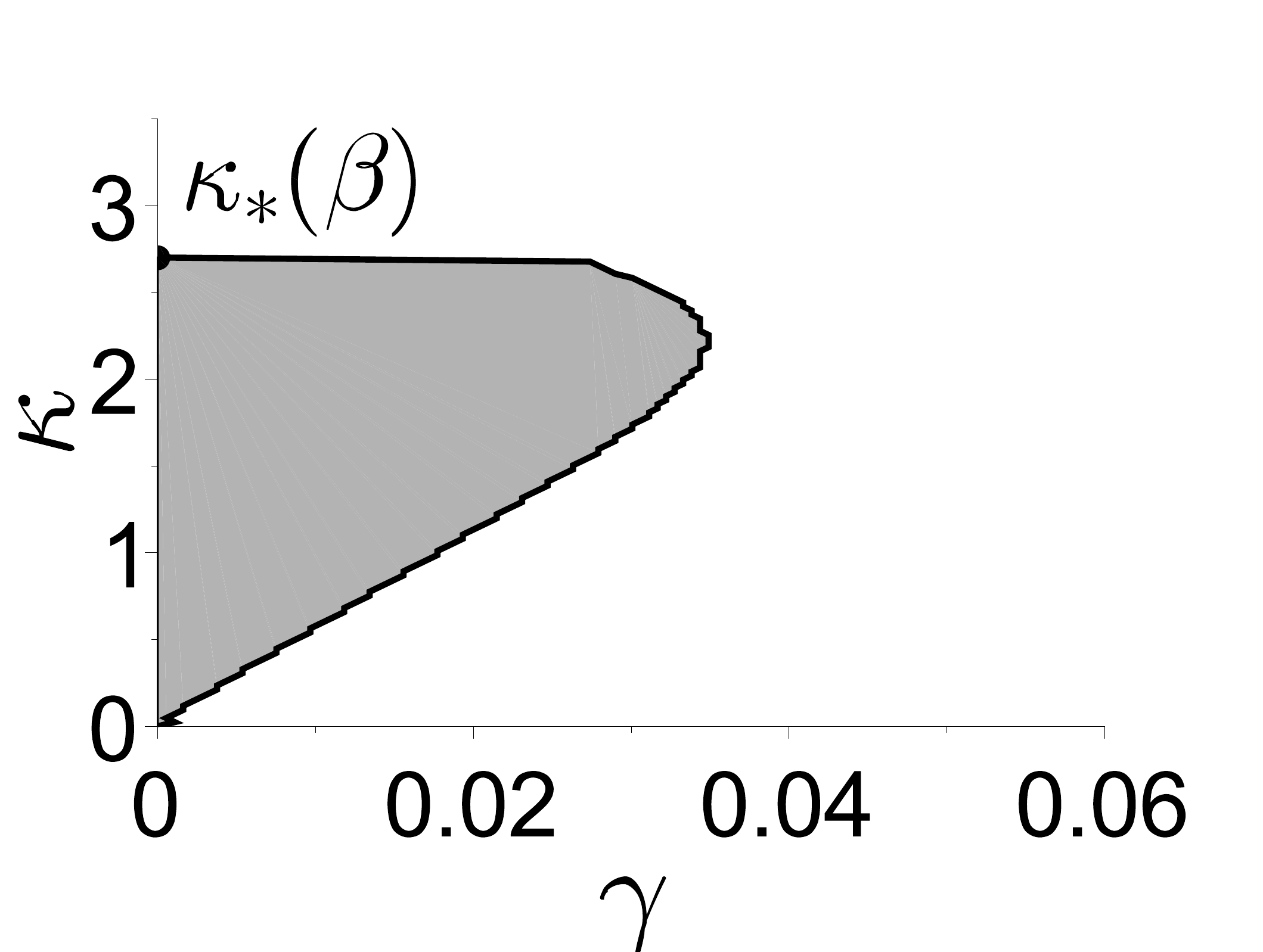}
\caption{$\beta = \frac{\pi}{2}-0.25$}
\label{031c}
\end{subfigure}
\caption{Numerical domain  of synchronization $U_\beta$ of the  model \eqref{equation:NumericalWinfreeModel}. In all cases we choose initial conditions randomly distributed in  $[-\frac{\pi}{2},\frac{\pi}{2}]$ and  $N=100$ oscillators with  ordered and equidistant natural frequencies $\omega_i$ in $[1-\gamma, 1+\gamma]$. $U_\beta$ is in grey and obtained by testing the largest $\gamma$ satisfying  $|d_X(T)|<3\pi$. In figure \subref{031a}, $T=1500$, $\beta =0$ and $\kappa_*(\beta) \approx 0.769$. In figure \subref{031b},  $T=1800$, $\beta = \frac{\pi}{2}-0.5$ and $\kappa_*(\beta)  \approx 1.936$. In figure \subref{031c}, $T=3500$, $\beta = \frac{\pi}{2}-0.25$ and $\kappa_*(\beta)  \approx 2.694$.}
\label{img031}
\end{figure}

We show in figure \ref{021a} the variation of the critical parameter $\kappa_*(\beta)$ for $\beta \in [0,\pi]$; we notice that the minimum is obtained at $\kappa_*(0) \approx 0.769$. Let be
\[
H_\kappa(\beta) := \int_0^{2\pi} \frac{ P_\beta(s)R'(s)}{1-\kappa P_\beta(s)R(s)} \, ds.
\]
We also show numerically in \ref{021a}, that the gray region corresponds to   the domain of $(\beta,\kappa)$ satisfying  $\beta \in [0, \pi]$,  $\kappa \in [0, \kappa_*(\beta)]$, and $H_\kappa(\beta) > 0$, that is to the domain  $\beta < \frac{\pi}{2}$. As noticed by the referee, the symmetry $H_\kappa(\beta) = -H_\kappa(\pi - \beta)$ implies $H_\kappa(\frac{\pi}{2})=0$ for every $\kappa \in [0,\kappa_*(\frac{\pi}{2})]$.  We choose   $\kappa=0.6$ in figure \ref{021b} and compute the largest $\gamma$ satisfying $|d_X(T)|<3\pi$, that is the boundary of $U_\beta$ at $\kappa=0.6$. We notice that the numerical domain of synchronization is negligible if and only if $\beta \in [\frac{\pi}{2}, \pi]$.
\begin{figure}[h!]
\centering
\begin{subfigure}[t]{0.48\textwidth}
 \centering \includegraphics[width=\textwidth]{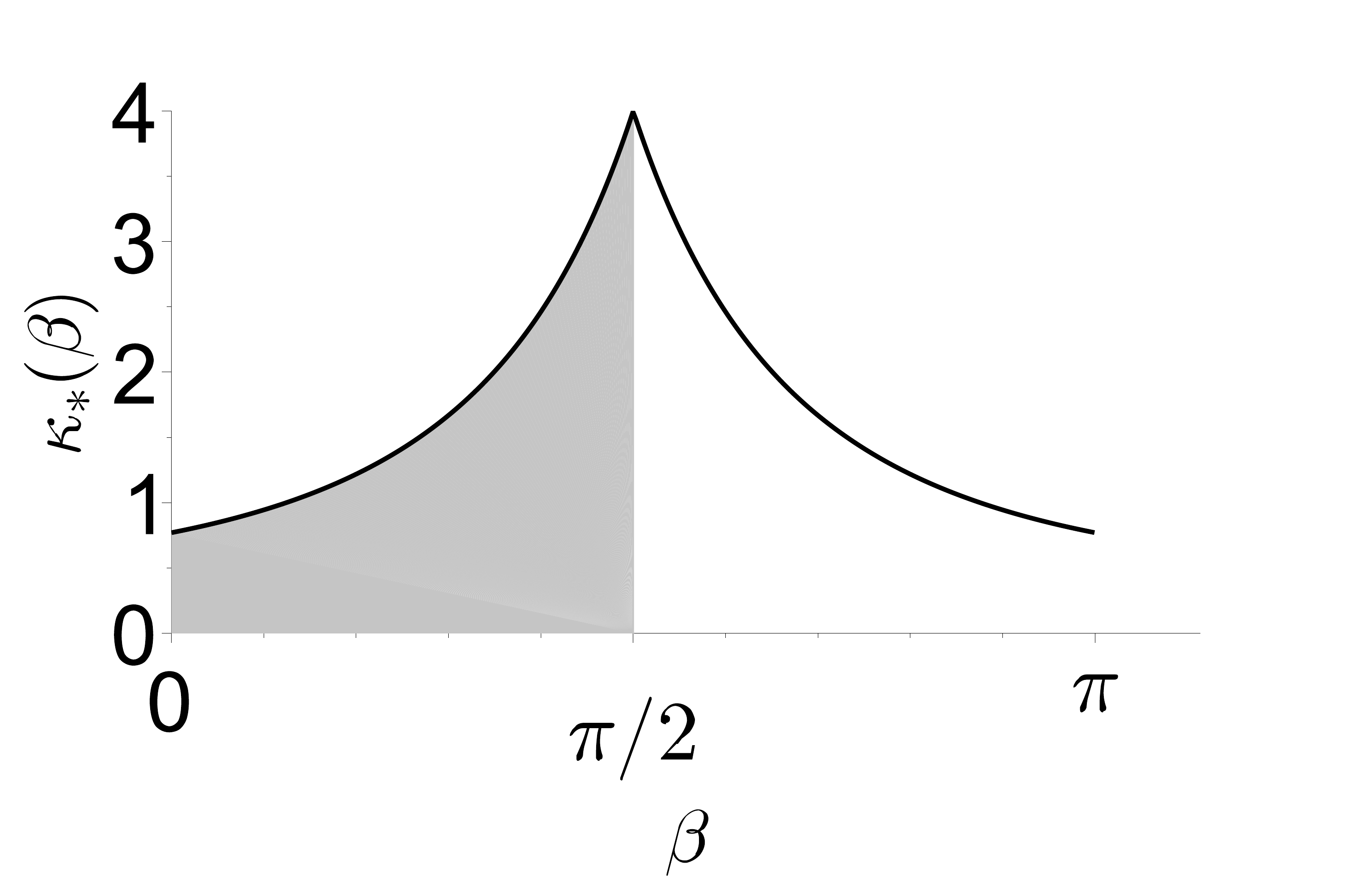}
\caption{}
\label{021a}
\end{subfigure}
~
\begin{subfigure}[t]{0.48\textwidth}
\centering \includegraphics[width=\textwidth]{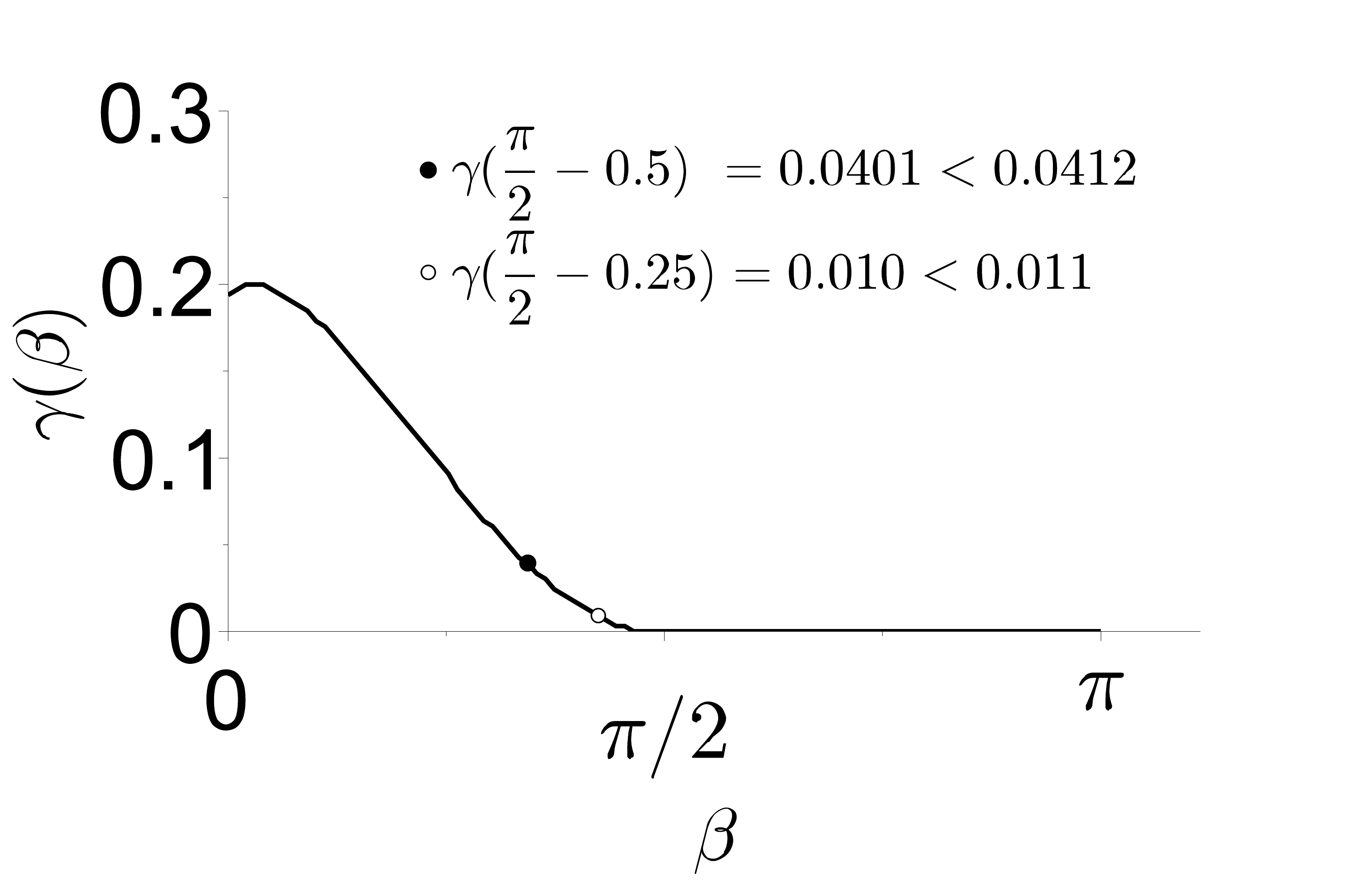}
\caption{}
\label{021b}
\end{subfigure}
\caption{We  plot in \ref{021a} the critical parameter of the death state transition $\kappa_*(\beta)$. We choose in \ref{021b}  $\kappa = 0.6$, $T= 3\times10^{4}$ and draw the {\it desynchronization curve}, the largest $\gamma$  for which $d_X(T)<3\pi$. We use the same experiment parameters as in figure \ref{img031}.}
\label{img021}
\end{figure}

We study in figure \ref{img033} the variation of the order parameter $r_X(\beta)$ as $\beta$ increases in $[0,\pi]$ at a given  time $T$ and coupling strength $\kappa$. A numerical value $r_X(\beta) \approx 1$ suggests a very tight cluster of almost all oscillators on the circle, a value $r_X(\beta) \approx 0$ suggests on the contrary symmetrically distributed oscillators. We observe a sharp decrease of $r_X(\beta)$ at $\beta=\frac{\pi}{2}$ that is when $H_\kappa(\beta)$ becomes negative.

\begin{figure}[h!]
\centering
\begin{subfigure}[t]{0.3\textwidth}
\centering \includegraphics[width=\textwidth]{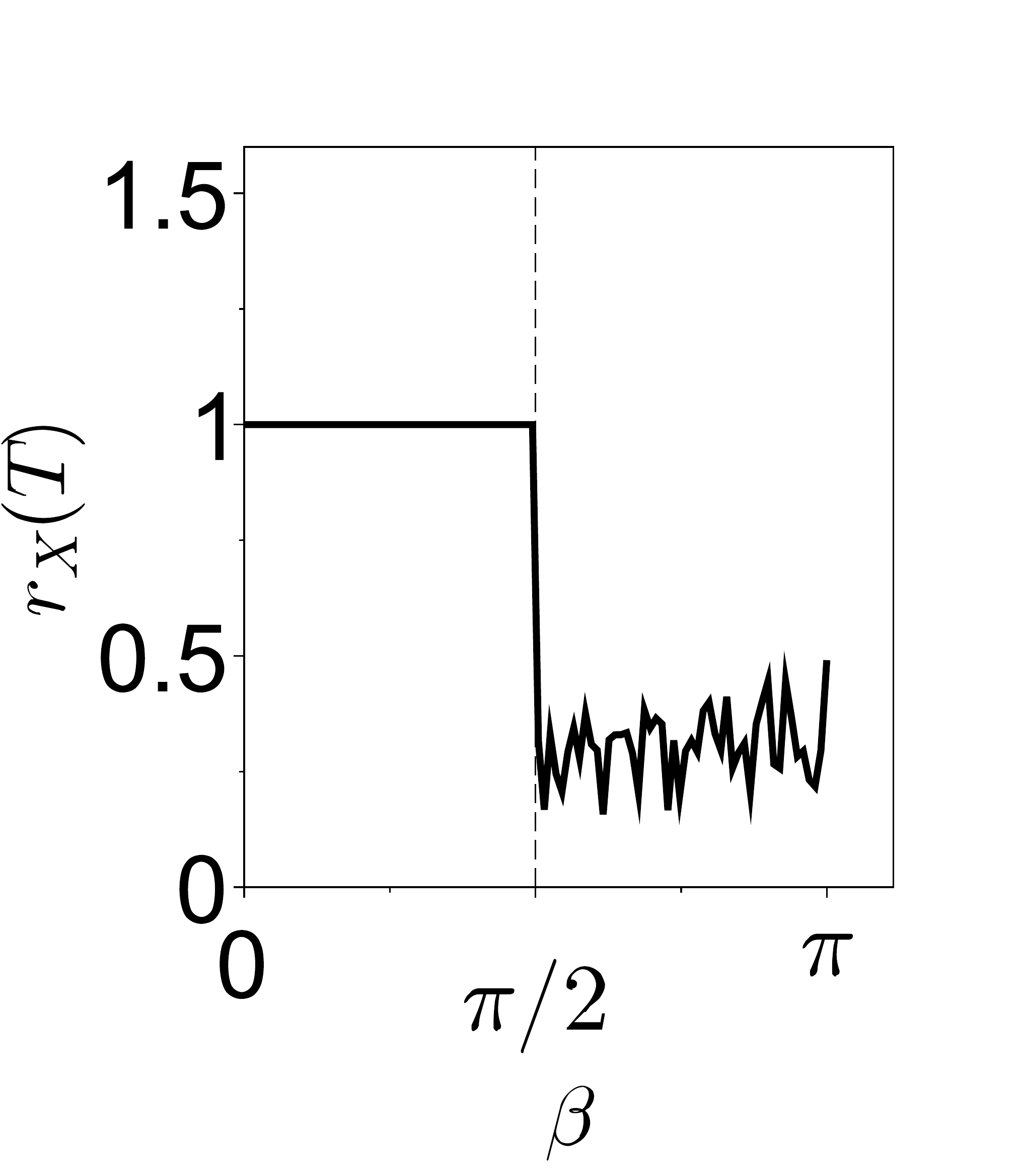}
\caption{ $\gamma_0 = 0$}
\label{033c}
\end{subfigure}
~
\begin{subfigure}[t]{0.3\textwidth}
\centering \includegraphics[width=\textwidth]{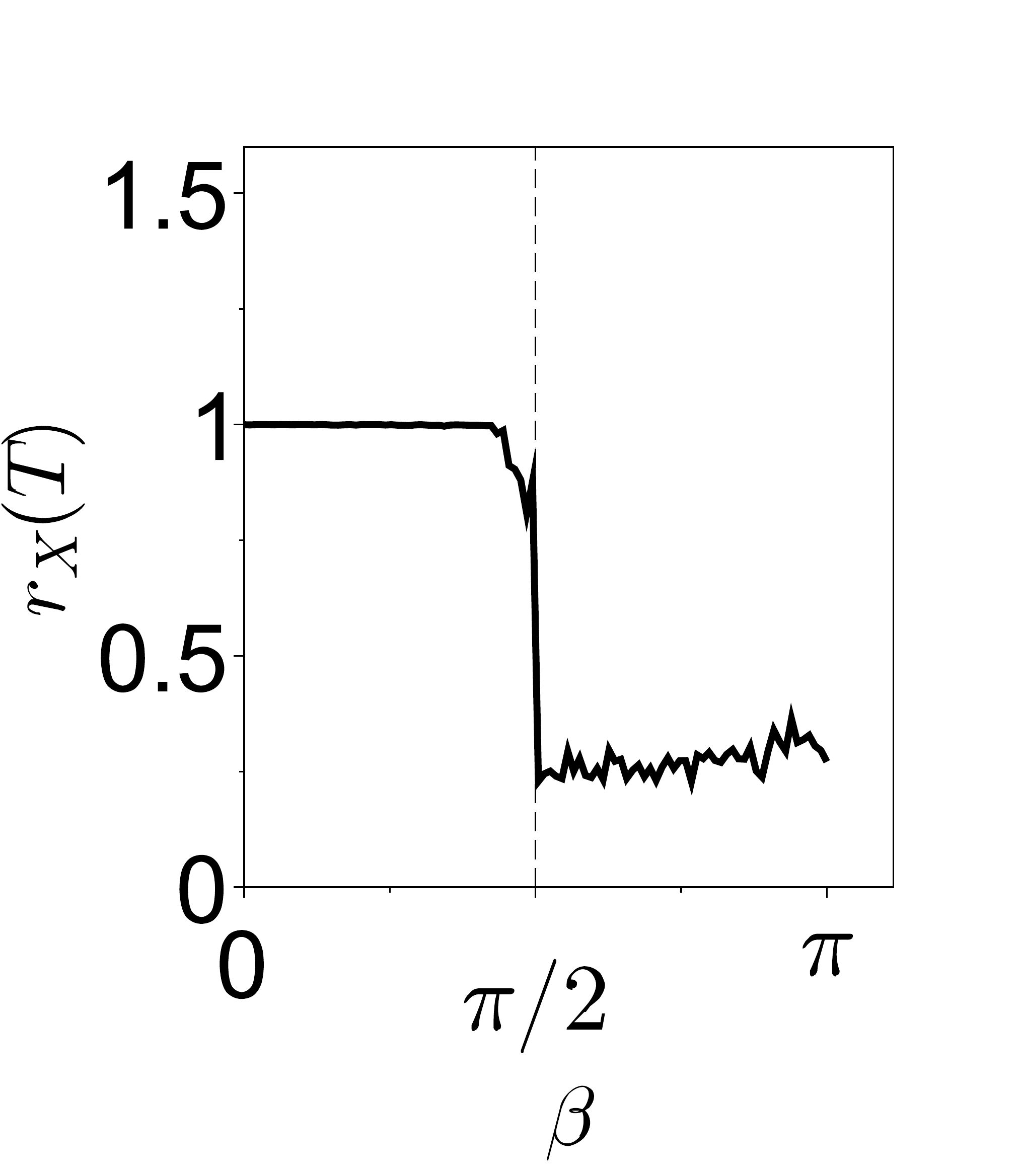}
\caption{ $\gamma_1 = 0.011$}
\label{033b}
\end{subfigure}
~
\begin{subfigure}[t]{0.3\textwidth}
\centering \includegraphics[width=\textwidth]{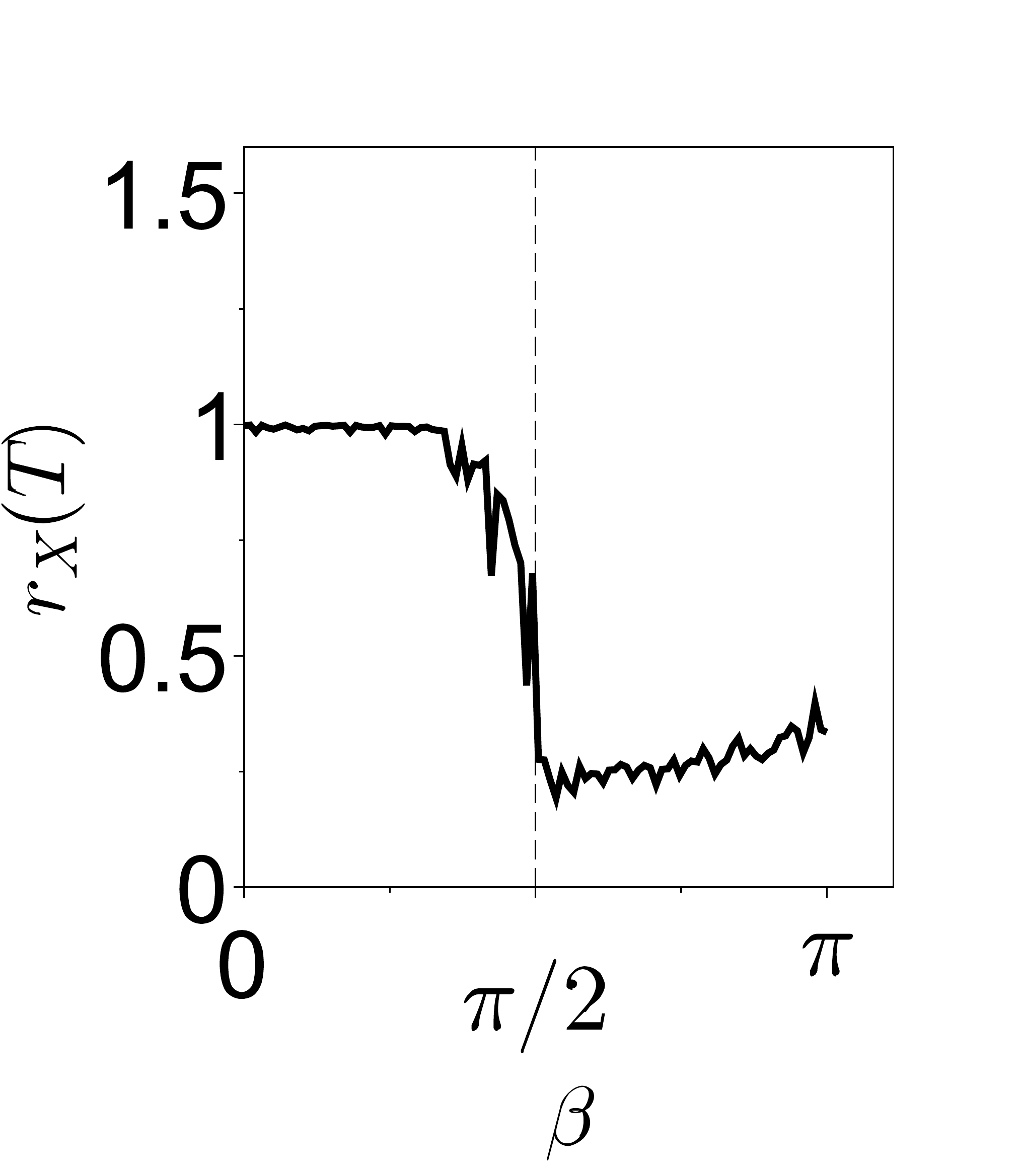}
\caption{ $\gamma_2 = 0.0412$}
\label{033a}
\end{subfigure}
\caption{The plot the order parameter $r_X(\beta)$ at $T =3000$ and  $\kappa = 0.6$. The other parameters are chosen as in figure \ref{img031}.}
\label{img033}
\end{figure}

Figure \ref{img032} shows the graph of $d_X(t)$ as a function of $t \in [0, 3 \times10^4]$ for different values of $\gamma$ and $\beta$ at a given $\kappa=0.6$. It also shows the location of the oscillators on the circle at the final time $T=3\times10^4$. Synchronization has been defined by the condition $\sup_{t>0} d_X(t) <+\infty$. For $\beta > \frac{\pi}{2}$, except at $\gamma\approx 0$, $t\mapsto d_X(t)$ does not seem  to be bounded: the oscillators are desynchronized. This observation corroborates the disappearance of the synchronization domain in figure \ref{img021} when $\beta > \frac{\pi}{2}$.   For $\gamma=0$, the oscillators have identical natural frequencies;  the comparison principle of ODE's in the periodic case implies, if $\max_{1 \leq i,j \leq N} |x_i(0)-x_j(0)| \leq 2\pi$, that $d_X(t) \leq 2\pi$ for every $t>0$: the oscillators are always synchronized. For $\beta < \frac{\pi}{2}$, we choose two values of $\gamma$ which are close, but above,  the desynchronization curve: $\gamma_1 > \gamma(\frac{\pi}{2}-0.25)$ and $\gamma_2 > \gamma(\frac{\pi}{2}-0.5)$. The plots suggest a non decreasing function $t\mapsto d_X(t)$ going to infinity  with flat parts.

\begin{figure}[h!]
    \centering
    \begin{subfigure}[t]{0.92\textwidth}
        \centering \includegraphics[width=\textwidth]{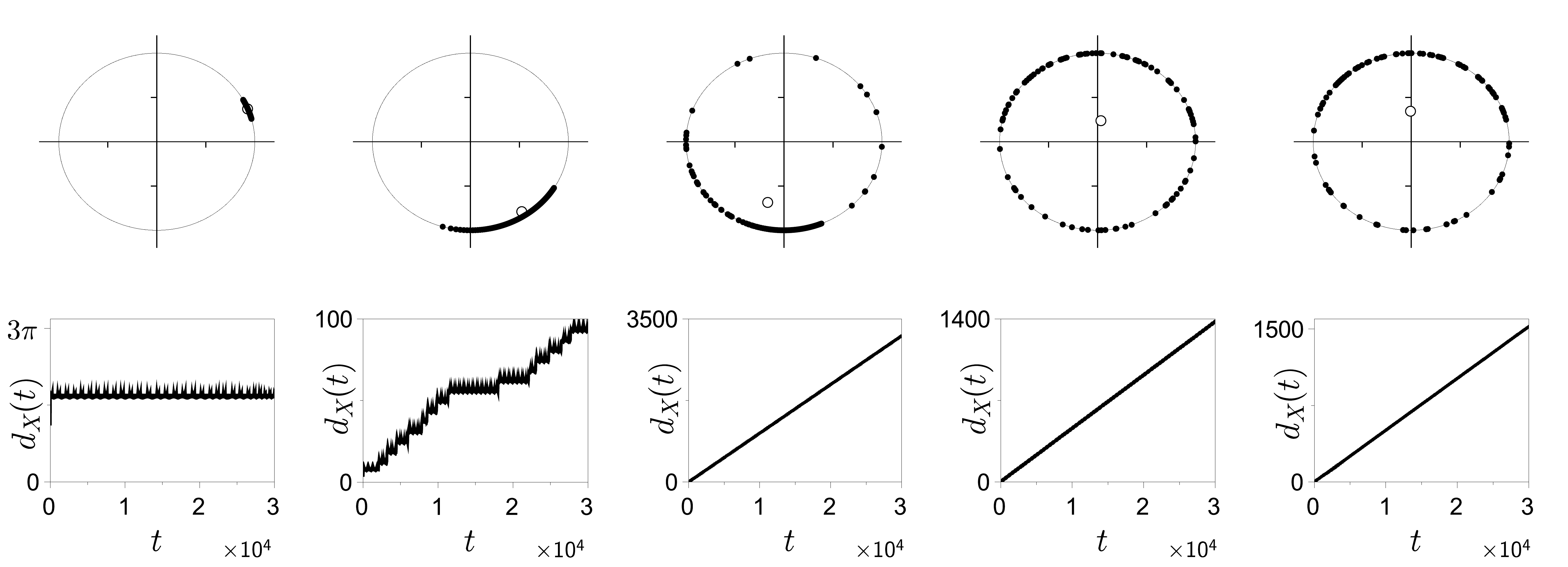}
          \caption{$\gamma_2 = 0.0412$}\nonumber
    \end{subfigure}

    \begin{subfigure}[t]{0.92\textwidth}
        \centering \includegraphics[width=\textwidth]{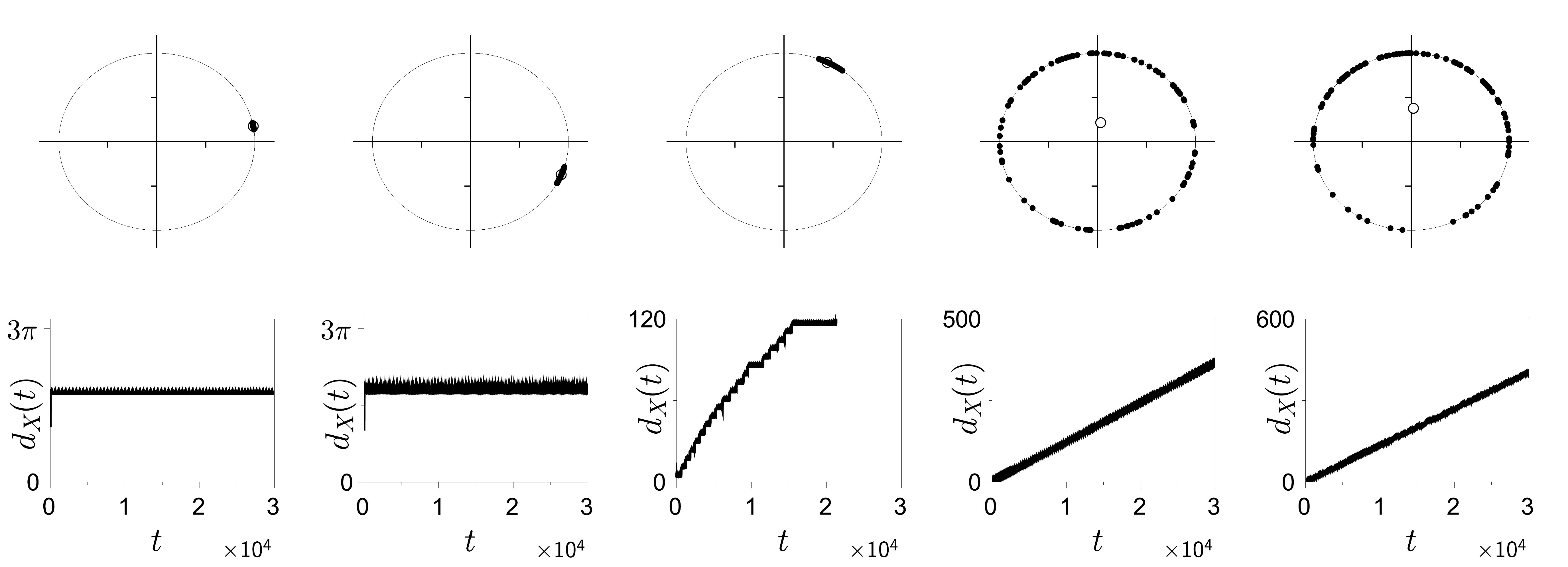}
       \nonumber\caption{$\gamma_1 = 0.011$}
    \end{subfigure}

    \begin{subfigure}[t]{0.92\textwidth}
        \centering \includegraphics[width=\textwidth]{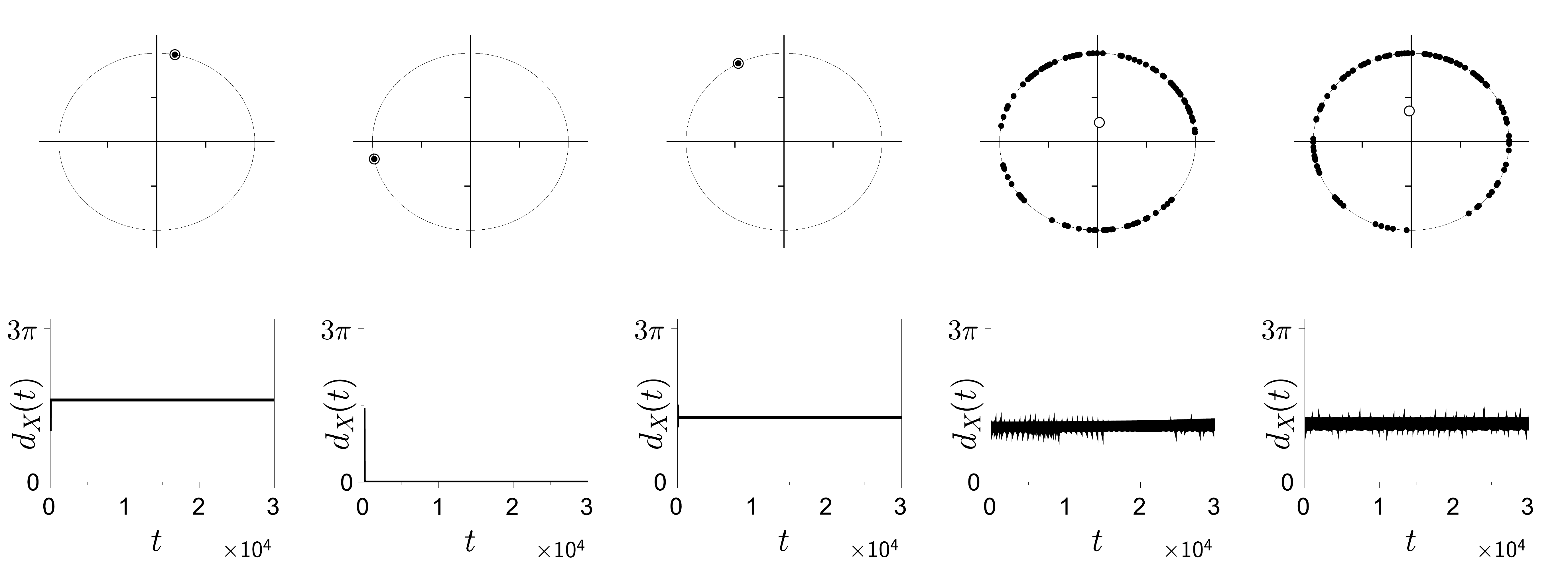}
       \caption{$\gamma_0 = 0$}
    \end{subfigure}
\caption{We choose $\kappa = 0.6$, random initial conditions  in $[-\pi,\pi]$, and the other parameters as in figure \ref{img031}.  We choose vertically from botton to top, $\gamma_0=0$,  $\gamma_1=0.011$ et $\gamma_2=0.0412$, horizontally from left to right, $\beta =0$, $\beta = \frac{\pi}{2}-0.5$, $\beta = \frac{\pi}{2}-0.25$, $\beta =\frac{\pi}{2}+0.25$ and $\beta =\frac{\pi}{2}+0.5$.   We plot the order parameter $d_X(t)$ for $t\in[0,3\times 10^{4}]$,  the oscillators on the circle (bold dots) and the order parameter $r_X(\beta)$ (circle dot) at time $T=3\times 10^{4}$.}
\label{img032}
\end{figure}

\section{Conclusion}

We defined synchronization as a state where the dispersion of the oscillators is uniformly bounded in time.  For every given coupling strength $\kappa$, corresponding to the non death state, we showed analytically the existence of synchronized oscillators when the hypothesis H3 is satisfied and when the spectrum width $\gamma$ is sufficiently smaller than some value depending on  $\kappa$. In addition, under the same hypothesis, we showed the existence of $N$ periodic synchronized oscillators with the same strong rotation number. We also showed numerically, for  a parametrized simplified Winfree model at a given coupling strength, that the hypothesis H3 seems to be necessary for the existence of  synchronization. 

\section*{Acknowledgements}

The first author would like to thank Professor Strogatz for his support to answering many questions by mail.  We also would like to thank both referees for their precise remarks that helped us to improve the text.

%%%%%%%%%%%%%%%%%%%%%%%%%%%%%%%%%%%%%%%%%%%%%%%%%%%%%

\end{document}